\newtheorem{thm}{Theorem}[section]
\newtheorem{cor}[thm]{Corollary}
\newtheorem{lem}[thm]{Lemma}
\newtheorem{prop}[thm]{Proposition}
\theoremstyle{definition}
\newtheorem{defn}[thm]{Definition}
\newtheorem{rem}[thm]{Remark}
\numberwithin{equation}{section}
\newcommand{\be}{\begin{equation}}
\newcommand{\ee}{\end{equation}}
\begin{document}

\sloppy

%\begin{flushright} 	\begin{tabular}{ll}		\textsf{Uzbek Mathematical Journal} &  \\		\textsf{20??,\ Volume ??, Issue ?, pp.\pageref{firstpage}-\pageref{lastpage}}\\		{DOI: 10.29229/uzmj.20??-?-??}\\ \end{tabular} \end{flushright}

\begin{center}
\textbf{\large Anti-Rota-Baxter operators on Witt and Virasoro algebras }\\

\textbf{ Azizov M. E.}
\end{center}

\textbf{Abstract.} In this work, we obtain the description of all homogeneous anti-Rota-Baxter operators on Witt and Virasoro algebras. Moreover, we describe anti-Rota-Baxter operators on three-dimensional simple Lie algebra $sl_2.$

\textbf{Keywords:} Anti-Rota-Baxter operators, Witt algebra, Virasoro algebras.

\textbf{MSC (2020):17A36,17B38,17B68.}

\makeatletter
\renewcommand{\@evenhead}{\vbox{\thepage \hfil {\it Azizov M.}  \hrule }}
\renewcommand{\@oddhead}{\vbox{\hfill
{\it Anti-Rota-Baxter operators on Witt and Virasoro algebras }\hfill
\thepage \hrule}} \makeatother

\label{firstpage}

\section{Introduction}

The concept of Rota-Baxter operators is used in combinatorics and probability theory. Later, it was used in algebra. In 1960, Glen Baxter introduced a Baxter equation, which satisfies a certain property related to probability distributions \cite{Baxter}, using Rota-Baxter operators,
$$2(a(aT))T=(a^2b)T+(aT)^2.$$

In 1983, M.A. Semenov-Tian-Shansky studied the case of Rota-Baxter equations with weight 0 from the perspective of Lie algebras, interpreting them as Yang-Baxter equations \cite{S}. The concept of Rota-Baxter operators goes back to Richard Baxters and Gian-Carlo Rota and found several fields of mathematics \cite{Rota-Kung}.
Rota–Baxter operator of weight $0$ on a Lie algebra is exactly the operator form of the classical Yang–Baxter equation (CYBE), which was regarded as a ``classical limit'' of the quantum Yang–Baxter
equation \cite{Belavin}, whereas the latter is also an important topic in many fields such as symplectic geometry, integrable systems,
quantum groups and quantum field theory \cite{GS, Guo1, Guo2}.

By the 21st century, Rota-Baxter operators experienced a period of advancement. This can be seen in the algebraic renormalization method in quantum field theory, dendriform algebras, the associative analog of the classical Yang-Baxter equation, and it works on systems with mixed commutative and non-commutative products.

A linear operator $R: A \rightarrow A$ is called a Rota-Baxter operator on $A$ of weight $\lambda$ if the
following identity
\begin{equation}
   [R(x),R(y)] = R([R(x),y]+[x,R(y)]+ \lambda xy),
 \end{equation}
holds for any $x, y\in A.$

It is known that if $R$ is a Rota-Baxter operator weight $\lambda$ with $\lambda \neq 0,$ then the
operator $\lambda^{-1}R$ is a Rota-Baxter operator of weight 1. Moreover, if $d$ is an invertible derivation of an algebra $A$, then $d^{-1}$ forms a Rota-Baxter operator of weight zero \cite{Guo2}.

Initially, Filippov and Hopkins introduced the concept of anti-derivations in their works \cite{fil95, hop}. Later, Filippov generalized the notions of derivations and anti-derivations introducing the notion of $\delta$-derivation \cite{fil98}.
Later,  the concept of $\delta$-derivations has been further studied, giving rise to significant results \cite{fil99, zak22, pasha10}.
In particular, it was proved that prime Lie algebras, do not have nonzero $\delta$-derivations (provided $\delta \neq 1, -1, 0, \frac 1 2$), and alternative, non-Lie Malcev algebras with certain restrictions of $\mathbb{F}$ have no non-trivial $\delta$-derivation. Zusmanovich in \cite{pasha10} proved that a prime Lie superalgebra has no non-trivial $\delta$-(super)derivations for $\delta\neq  1, -1, 0, \frac 1 2$.

Based on the principal relation between derivations and Rota-Baxter operators it can be introduced the notion of $\delta$-Rota-Baxter operator weight $0$ with the condition $[R(x),R(y)] = \delta R([R(x),y]+[x,R(y)]).$
Similarly, to the derivation case, any invertible $\delta$-derivation of the algebra $A$ gives $\delta$-Rota-Baxter operator weight $0$.
On the other hand, the structure for constructing anti-$\mathcal{O}$-operators was developed by Kupershmidt \cite{Ku}, and according to it, the following definition of the anti-Rota-Baxter operator arises.

\begin{defn}  An {\bf anti-Rota-Baxter operator} on an algebra $\mathfrak{g}$
over the field $\mathbb F$ is
a linear map $R: \mathfrak{g}\rightarrow \mathfrak{g}$ satisfying
 \begin{equation}\label{RB-lambda}
   [R(x),R(y)] = -R([R(x),y]+[x,R(y)]), \qquad\forall x, y\in \mathfrak{g}.
 \end{equation}
 An anti-Rota-Baxter operator $R$ is called a {\bf strong} if R satisfies
 \begin{equation}\label{Rb-strong}
 [[R(x), R(y)], z] + [[R(y), R(z)], x] + [[R(z), R(x)], y] = 0, \quad \forall x, y, z \in \mathfrak{g}.
 \end{equation}
\end{defn}

In \cite{bas} the homogeneous Rota-Baxter operators on the Witt and Virasoro algebras were classified. In this paper, we obtain anti-Rota-Baxter operators on the Witt and Virasoro algebras by using this method. Moreover, we classify anti-Rota-Baxter operators on three-dimensional simple Lie algebra $sl_2.$ It should be noted that Rota-Baxter operators on $sl_2$ were obtained by J. Pei, C. Bai, and L. Guo in \cite{PBG}.

  \section{Homogeneous Anti-Rota-Baxter operators on the Witt and Virasoro algebra}

We also assume that $\mathbb F=\mathbb C$, is the complex field since both the Witt and Virasoro algebras are defined over $\mathbb C$.

\begin{defn}
The {\it Witt algebra} $W$ is a Lie algebra with the basis $\{L_n \ | \  n\in \mathbb{Z}\}$ and multiptications:
  \begin{equation}\label{Witt}
    [L_m,L_n]=(m-n)L_{m+n}, \quad \forall m,n \in \mathbb{Z}.
  \end{equation}
\end{defn}

There is a natural $\mathbb{Z}$-grading on the Witt algebra $W$, namely
\[
W=\bigoplus_{n\in\mathbb{Z}}W_n,
\]
where $W_n=\operatorname{span}\{L_n\}$ for any $n\in \mathbb{Z}$.

\begin{defn}\label{def:homoWitt}
Let $k$ be an integer. A {\it homogeneous operator $R$ with degree $k$} on the Witt algebra $W$ is a linear operator on $W$ satisfying
$$
R(W_m)\subset W_{m+k}, \quad \forall m\in\mathbb{Z}.
$$
\end{defn}

Therefore, a {\it homogeneous anti-Rota-Baxter operator $R_k$ with degree $k$} on the Witt algebra $W$ is an anti-Rota-Baxter operator on $W$
of the following form
\begin{equation}\label{eq:HRB}
R_k(L_m)=f(m+k)L_{m+k}, \quad \forall m\in \mathbb{Z},
\end{equation}
where $f$ is a $\mathbb C$-valued function defined on $\mathbb{Z}$.

\subsection{Homogeneous anti-Rota-Baxter operators on the Witt algebra}
\mbox{}
Let $R_k$ be a homogeneous anti-Rota-Baxter operator   with degree $k$ on the Witt algebra
$W$ satisfying equation (\ref{eq:HRB}). Then by equations ~(\ref{RB-lambda}) and (\ref{Witt}), we obtain that the function $f$ satisfies the following equation:
\begin{equation}\label{weight 0}
  f(m)f(n)(n-m)=f(m+n)(f(m)(m-n+k)+f(n)(m-n-k)),\quad \forall m,n\in \mathbb{Z}.
\end{equation}

Consider the following cases. 

\textbf{Case 1.} Let $k=0.$ Then the equation (\ref{weight 0}) becomes
\begin{equation}\label{k=0}
(m-n)\Big(f(m)f(n)+f(m+n)\big(f(m)+f(n)\big)\Big)=0,\;\;\forall m,n\in \mathbb{Z}.
\end{equation}

Putting $n=0$ in the equation, we have
\begin{equation}
mf(m)(f(m)+2f(0))=0.
\end{equation}

 If $f(0)=0,$ then we derive $f(m)=0$ for any $m\in \mathbb{Z}$.

 Now we assume $f(0)\neq 0$ and define following sets:$$I=\lbrace m\in \mathbb{Z}\setminus \lbrace 0 \rbrace : f(m)=0\rbrace, \qquad J=\lbrace m\in \mathbb{Z}\setminus \lbrace 0 \rbrace : f(m)=-2f(0)\rbrace .$$ 
 
 Since $f(0) \neq 0,$ we easily obtain that $I \bigcap J = \varnothing $ and $I \bigcup J = \mathbb {Z} \setminus \lbrace 0 \rbrace .$

\begin{lem}\label{-n}
    Let $n$ be a nonzero integer, then $n \in I $ if and only if $-n \in I.$
\end{lem}
\begin{proof}
Taking $m=-n$ in (\ref{k=0}), we get $$f(-n)f(n)+f(0)\big(f(-n)+f(n)\big)=0.$$ 

From this equation we obtain that $f(n)=0$ if and only if $f(-n)=0.$
\end{proof}

% \begin{lem}\label{m+n}
%     Let $n\in J$ then, for all $m \neq \pm n:$ $m+n \in I$ iff $m\in I.$
% \end{lem}
% \begin{proof}
%     If $m+n \in I, m \neq n$ then by ~(\ref{k=0}): $(n-m)f(n)f(m)=0.$ From $f(n)\neq 0$ immediately, implies $f(m)=0.$

%     If $m\in I, m \neq \pm n$ then by ~(\ref{k=0}): $(n-m)f(m+n)f(n)=0,$ with same reason implies $f(m+n)=0.$
% \end{proof}

\begin{prop}\label{prop1}
The anti-Rota-Baxter operator $R_0$ with degree $0$
is given by
\begin{equation*}
R_0(L_n)= \begin{cases}
0,   \quad  \quad  n \neq 0,\\
\alpha L_0,\quad  n=0,
\end{cases}
\end{equation*}
where $\alpha\in \mathbb{C}$.
\end{prop}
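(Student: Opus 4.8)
The plan is to build on the dichotomy already in place: when $f(0)=0$ the operator is identically zero (the case $\alpha=0$), so it remains to treat $f(0)\neq 0$, where every nonzero integer lies in exactly one of $I$ or $J$, and the whole proposition reduces to showing $J=\varnothing$. Writing $c=f(0)\neq 0$, the possible values of $f$ are then $f(0)=c$, $f(m)=0$ for $m\in I$, and $f(m)=-2c$ for $m\in J$; the goal is to exclude the last possibility entirely.

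First I would feed two elements of $J$ into (\ref{k=0}). If $m,n\in J$ are distinct with $m+n\neq 0$, then $f(m)=f(n)=-2c$, and (\ref{k=0}) collapses to $4c^2-4c\,f(m+n)=0$, forcing $f(m+n)=c$. Since $c\neq 0$ equals neither $0$ nor $-2c$, the value $c$ is attained by $f$ only at $0$, so $m+n=0$, a contradiction. Hence any two distinct elements of $J$ are negatives of one another; combined with the $J$-analogue of Lemma \ref{-n} (namely $n\in J\iff -n\in J$, which is immediate since $I$ and $J$ partition $\mathbb Z\setminus\{0\}$), this forces either $J=\varnothing$ or $J=\{a,-a\}$ for a single positive integer $a$. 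The role of the lemma here is precisely to rule out a lone singleton $J=\{a\}$.

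It then remains to eliminate $J=\{a,-a\}$. Here $2a\neq \pm a$ (as $a\neq 0$), so $2a\in I$ and $f(2a)=0$, while $f(-a)=-2c$ and $f\big(2a+(-a)\big)=f(a)=-2c$. Substituting $m=2a$, $n=-a$ into (\ref{k=0}) yields $3a\big(0+(-2c)(0-2c)\big)=12ac^2\neq 0$, contradicting (\ref{k=0}). Therefore $J=\varnothing$, i.e. $f(n)=0$ for all $n\neq 0$ while $f(0)=\alpha$ is free; together with the $f(0)=0$ case this is exactly the operator claimed, and a direct substitution confirms that it does satisfy (\ref{k=0}). The one delicate point is the choice of test values: the argument closes only because, once $J$ has been pinned down to $\{a,-a\}$, the triple $(2a,-a,a)$ isolates the uncancellable term $f(-a)f(a)=4c^2$.
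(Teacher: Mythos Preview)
Your proof is correct and rests on the same decisive substitution as the paper's: plugging $(m,n)=(2p,-p)$ into (\ref{k=0}) for an element $p\in J$. The paper is slightly more direct---it applies this substitution immediately (using Lemma~\ref{-n} to get $f(-p)=-2f(0)$) and reads off $f(2p)=f(0)$, which already contradicts $f(2p)\in\{0,-2f(0)\}$; so your preliminary step constraining $J$ to at most $\{a,-a\}$ is not actually needed.
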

\begin{proof}
% Let $1 \in J$ then by Lemma \ref{-n}: $-1\in J$ i.e. $f(1)=f(-1)=-2f(0).$ We consider equation \ref{k=0} with case $m=2, n=-1$: \[ f(2-1)(f(2)+f(-1))+f(2)f(-1)=0 \Rightarrow f(2)+f(-1)+f(2)=0 \Rightarrow f(2)=\frac{-1}{2}f(-1)=f(0),\] contradiction. Then $1$ must belongs to $I.$

% Now we take the case  m=1:
% \[f(n+1)(f(n)+f(1))+f(n)f(1)=0 \Rightarrow f(n+1)f(n)=0\] with the help of this argument we get next condition:\[f(n+1+n)(f(n+1)+f(n))+f(n+1)f(n)=0 \Rightarrow f(2n+1)(f(n+1)+f(n))=0.\] Now we have two possible solutions: $f(2n+1)=0$ or $f(n+1)=-f(n).$ For some $n.$

Let $f(p)=-2f(0)\neq 0,$ for $p \neq 0,$ then putting $m=2p, n=-p$ in (\ref{k=0}), we have  \[ f(2p-p)(f(2p)+f(-p))+f(2p)f(-p)=0.\] 

Then by Lemma \ref{-n}, we get that $f(-p)=f(p)=-2f(0) \neq 0,$
which implies $2f(2p)+f(-p)=0$. Then, we get $f(2p)=-\frac{1}{2}f(-p)=f(0).$ But $f(2p)=0$ or $f(2p) = -2f(0).$ From this, we get $f(0)=0,$ which is a contradiction. Hence, $f(p)=0$ for all $p\neq 0.$
\end{proof}

\textbf{Case 2.} Let $k\ne 0.$ In the case of $f(0)=0,$ we have the following Proposition.
\begin{prop}\label{prop2}
Let $k\neq 0$ and $f$ satisfies the equation (\ref{weight 0}). If $f(0)=0$, then
\begin{equation*}
f(m)=\begin{cases}
0, & m \neq -k,\\
\alpha,& m=-k,
\end{cases}
\end{equation*}
where $\alpha\in \mathbb{C}$.
\end{prop}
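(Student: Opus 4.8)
The plan is to extract the pointwise behaviour of $f$ directly from the functional equation (\ref{weight 0}) by a single well-chosen substitution, and then to check that the resulting candidate is genuinely a solution. First I would set $n=0$ in (\ref{weight 0}). Since $f(0)=0$ by hypothesis, the left-hand side vanishes and the right-hand side collapses to $f(m)\cdot f(m)(m+k)$, yielding
\[
f(m)^2(m+k)=0,\qquad \forall m\in\mathbb{Z}.
\]
As $\mathbb{C}$ has no zero divisors, this forces $f(m)=0$ for every $m$ with $m+k\neq 0$, that is, for every $m\neq -k$. The value $f(-k)$ is left entirely unconstrained by this substitution, so I would write $f(-k)=\alpha$ with $\alpha\in\mathbb{C}$ arbitrary.

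To finish, I would verify that this candidate solves (\ref{weight 0}) for every $\alpha$, i.e.\ that the remaining instances impose no further relation. The left-hand side $f(m)f(n)(n-m)$ is nonzero only when both $f(m)\neq 0$ and $f(n)\neq 0$, which forces $m=n=-k$; but then the factor $n-m$ vanishes, so the left-hand side is identically zero. For the right-hand side, $f(m+n)$ is nonzero only when $m+n=-k$, whereas the bracket $f(m)(m-n+k)+f(n)(m-n-k)$ is nonzero only when $m=-k$ or $n=-k$. Pairing $m+n=-k$ with $m=-k$ gives $n=0$, and with $n=-k$ gives $m=0$; in each case the surviving coefficient ($m-n+k$ or $m-n-k$, respectively) evaluates to $0$, so the right-hand side also vanishes. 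The configuration $m=n=-k$ cannot contribute on the right, since it would require $m+n=-2k=-k$, contradicting $k\neq 0$. Hence both sides vanish identically and the candidate is a genuine anti-Rota-Baxter operator.

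I expect the only real subtlety to be this verification step: one must track precisely which index configurations can make either side nonzero and confirm that a compensating factor annihilates the expression in each. The derivation of the form of $f$ itself is immediate from the $n=0$ substitution, and the crucial structural feature is the hypothesis $k\neq 0$, which prevents the single free value $f(-k)$ from feeding back into the quadratic left-hand side and thereby picking up an extra constraint.
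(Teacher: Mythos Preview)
Your proof is correct and follows exactly the paper's approach: the $n=0$ substitution in (\ref{weight 0}) with $f(0)=0$ yields $(m+k)f(m)^2=0$, forcing $f(m)=0$ for $m\neq -k$. Your additional verification that every $\alpha\in\mathbb{C}$ is admissible is a welcome supplement---the paper omits it here and defers such checks to the direct verification in Theorem~\ref{Thm2.14}.
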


\begin{proof}
If $f(0)=0$, then taking $n=0$ in the equation (\ref{weight 0}),
we have
\begin{equation*}
  (m+k)(f(m))^2=0,\;\;\forall m\in \mathbb{Z}.
\end{equation*}

Thus, $f(m)=0$ for $m \neq -k$ and $f(-k)$ can be any complex value. 
\end{proof}

Now consider the case of $f(0) \neq 0.$ Then putting $m=0, n= \frac{k}{2},$ in (\ref{weight 0}), we get $f(\frac{k}{2})=0.$ Now setting $n=0$ in (\ref{weight 0}), we have
$$f(m)\Big(f(0)(2m-k)+(m+k)f(m)\Big)=0.$$

%From this, it is easy to see that $f(\frac {k} {2}) =0.$
% Now consider $f(0)\neq 0.$ Then from (\ref{weight 0}) we get $f(\frac{k}{2})=0$. Now supplying $n=0$ in (\ref{weight 0}), we get following equation:
% $$f(m)(f(0)(2m-k)+(m+k)f(m))=0.$$

We set $\mathcal{I}$ and $\mathcal{J}:$ \[\mathcal{I}=\lbrace m\in \mathbb{Z} : f(m)=0 \rbrace, \qquad \mathcal{J}=\lbrace m\in \mathbb{Z}\setminus \{ \tfrac{k}{2} \}: f(0)(2m-k)+(m+k)f(m)=0 \rbrace .\] For these sets we can write $\mathcal{I} \bigcup \mathcal{J}=\mathbb{Z}, \ \ \mathcal{I} \bigcap \mathcal{J}=\varnothing.$

%%%%%%%%%%%%%%%%%%%%%%%%%%%%%%%%%%%%%%%%%%%%
%%%%%%%%%%%%%%%%%%%%%%%%%%%%%%%%%%%%%%%%%%%%
%%%%%%%%%%%%%%%%%%%%%%%%%%%%%%%%%%%%%%%%%%%%

\begin{lem}\label{lemma2.5}
Let $f$ be a $\mathbb C$-valued function defined on $\mathbb{Z}$ satisfying equation~(\ref{weight 0}). Suppose that $f(0)\neq 0$ and $k\ne 0$. If $n\in \mathcal{J}$ and
$m\neq n, n+k$, then $m\in \mathcal{I}$ if and only if $m+n\in \mathcal{I}$.
\end{lem}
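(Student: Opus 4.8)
The plan is to extract everything directly from the defining functional equation~(\ref{weight 0}) evaluated at the ordered pair $(m,n)$, using only the hypothesis $n\in\mathcal{J}$ together with the two exclusions $m\neq n$ and $m\neq n+k$. The first observation is that $n\in\mathcal{J}$ forces $f(n)\neq 0$: since $\mathcal{I}\cap\mathcal{J}=\varnothing$ and $\mathcal{I}$ is precisely the zero set of $f$, the membership $n\in\mathcal{J}$ gives $n\notin\mathcal{I}$, hence $f(n)\neq 0$. This nonvanishing of $f(n)$ is exactly what allows us to cancel it in each of the two implications below.

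For the implication $m\in\mathcal{I}\Rightarrow m+n\in\mathcal{I}$, I would set $f(m)=0$ in~(\ref{weight 0}). The left-hand side then vanishes, and the summand $f(m)(m-n+k)$ on the right drops out, leaving $f(m+n)\,f(n)\,(m-n-k)=0$. Because $f(n)\neq 0$ and the hypothesis $m\neq n+k$ guarantees $m-n-k\neq 0$, we conclude $f(m+n)=0$, i.e.\ $m+n\in\mathcal{I}$.

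For the converse $m+n\in\mathcal{I}\Rightarrow m\in\mathcal{I}$, I would instead set $f(m+n)=0$ in~(\ref{weight 0}). Now the entire right-hand side vanishes, leaving $f(m)\,f(n)\,(n-m)=0$. Again $f(n)\neq 0$, and the hypothesis $m\neq n$ gives $n-m\neq 0$, so $f(m)=0$, i.e.\ $m\in\mathcal{I}$.

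I do not expect a serious obstacle here: each direction reduces to a single substitution into~(\ref{weight 0}) followed by a cancellation. The only point requiring care is the bookkeeping — verifying that the two excluded values in the statement correspond exactly to the two linear factors ($m-n-k$ and $n-m$) that must be nonzero, one factor governing each direction, and confirming at the very start that $f(n)\neq 0$ via the disjointness of $\mathcal{I}$ and $\mathcal{J}$.
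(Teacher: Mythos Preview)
Your proposal is correct and matches the paper's proof essentially line for line: both directions are obtained by substituting the vanishing assumption into equation~(\ref{weight 0}) at the pair $(m,n)$ and cancelling the nonzero factors $f(n)$ and the relevant linear term ($m-n-k$ or $n-m$). Your version is in fact slightly more explicit than the paper's, since you spell out why $n\in\mathcal{J}$ forces $f(n)\neq 0$ via the disjointness of $\mathcal{I}$ and $\mathcal{J}$.
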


\begin{proof}
If $m\in \mathcal{I}$, $m\neq n+k$ and $n\in \mathcal{J}$, then by equation~(\ref{weight 0}), we have
\begin{align*}
(m-n-k)f(n)f(n+m)=0.
\end{align*}
Since $n\in \mathcal{J}$ and $m\ne n+k,$ we have $f(n+m)=0$.

Conversely, if $m+n\in \mathcal{I}$, then
by equation ~(\ref{weight 0}), we have
$$(m-n)f(m)=0.$$
Since $n\in \mathcal{J}$ and $m\neq n,$ we have $f(m)=0$.
Hence $m\in \mathcal{I}$.
\end{proof}

For an integer $m\in \mathbb{Z}$, we set
\[
\mathcal{J}_m=\{n\in \mathcal{J} \ | \ mn\in \mathcal{J}\},\;\;
\mathcal{I}_m=\{n\in \mathcal{J} \ | \ mn+k\in \mathcal{I}\}.
\]

\begin{prop}\label{prop2.6} For the set $\mathcal{J}_m$, we have the following:
\begin{enumerate}
  \item $\mathcal{J}_0=\mathcal{J}_1=\mathcal{J}$.
  \item $(\mathcal{J}\setminus \{-\tfrac{k}{2m}\})\cap \mathcal{J}_m\subset \mathcal{J}_{-m}$ for every $m\neq 0$.
  In particular, $\mathcal{J}\setminus \{-\tfrac{k}{2}\}\subset \mathcal{J}_{-1}$.
  \item $(\mathcal{J}\setminus \{-\tfrac{k}{2}, \tfrac{k}{m+1}\})\cap \mathcal{J}_{m-1}\subset \mathcal{J}_m$,
  $(\mathcal{J}\setminus \{-\tfrac{k}{m+1}\})\cap \mathcal{J}_{1-m}\subset \mathcal{J}_{-m}$ for $m\geq 2$.
  \item $(\mathcal{J}\setminus \{\tfrac{k}{2m-1}\})\cap \mathcal{J}_{1-m}\subset \mathcal{J}_{m}$,
  $(\mathcal{J}\setminus \{-\tfrac{k}{2}, -\tfrac{k}{2m-1}\})\cap \mathcal{J}_{m-1}\subset \mathcal{J}_{-m}$ for $m\geq 2$.
\end{enumerate}
\end{prop}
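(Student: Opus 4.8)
The plan is to reduce all four inclusions to one substitution scheme in the defining equation~(\ref{weight 0}), resting on two preliminary facts that I would establish first. Since $\mathcal{I}$ and $\mathcal{J}$ partition $\mathbb{Z}$ and $\mathcal{I}=\{p:f(p)=0\}$, for every integer $p$ one has $p\in\mathcal{J}\iff f(p)\neq0$; moreover $0\in\mathcal{J}$, because $0\neq\tfrac{k}{2}$ and $f(0)(2\cdot0-k)+(0+k)f(0)=0$. Second, substituting $(m,n)=(p,-p)$ into~(\ref{weight 0}) yields the reflection identity $-2p\,f(p)f(-p)=f(0)\big(f(p)(2p+k)+f(-p)(2p-k)\big)$, whose only consequence I need is: if $f(p)\neq0$ and $f(-p)=0$, then $p=-\tfrac{k}{2}$.

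Claim (1) is then immediate: $\mathcal{J}_1=\{n\in\mathcal{J}:n\in\mathcal{J}\}=\mathcal{J}$, and $\mathcal{J}_0=\{n\in\mathcal{J}:0\in\mathcal{J}\}=\mathcal{J}$ since $0\in\mathcal{J}$. For Claim (2) I would restate Lemma~\ref{lemma2.5} through the partition as: for $q\in\mathcal{J}$ and $p\neq q,q+k$ one has $p\in\mathcal{J}\iff p+q\in\mathcal{J}$. Given $n\in\mathcal{J}_m$ (so $mn\in\mathcal{J}$), I apply this with $q=mn$ and $p=-mn$: here $p+q=0\in\mathcal{J}$, while the two side conditions $p\neq q$ and $p\neq q+k$ read $mn\neq0$ and $mn\neq-\tfrac{k}{2}$, i.e. $n\neq-\tfrac{k}{2m}$ (the case $n=0$ being trivial since then $-mn=0\in\mathcal{J}$). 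Hence $-mn\in\mathcal{J}$, that is $n\in\mathcal{J}_{-m}$; taking $m=1$ and using (1) gives the stated $\mathcal{J}\setminus\{-\tfrac{k}{2}\}\subset\mathcal{J}_{-1}$.

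Claims (3) and (4) all fit one template (write (3a), (3b) for the two inclusions of item (3), and similarly (4a), (4b)). Each says that a target multiple $Nn$, with $N\in\{m,-m\}$, lies in $\mathcal{J}$ once $n$ and one further multiple lie in $\mathcal{J}$. I argue by contradiction: assuming $f(Nn)=0$, I substitute into~(\ref{weight 0}) the pair $a=Nn$ and a shift $b$ chosen so that $a+b$ is one of the two known elements of $\mathcal{J}$. Because $f(Nn)=0$, the left-hand side and the first bracket term vanish, and~(\ref{weight 0}) collapses to the single relation $0=f(a+b)\,f(b)\,(a-b-k)$, whose linear factor $a-b-k$ has a unique root that is exactly the excluded point. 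The four choices are $a=mn,\ b=-n$ (so $a+b=(m-1)n$, factor $(m+1)n-k$) for (3a); $a=-mn,\ b=n$ (so $a+b=(1-m)n$, factor $-(m+1)n-k$) for (3b); $a=mn,\ b=(1-m)n$ (so $a+b=n$, factor $(2m-1)n-k$) for (4a); and $a=-mn,\ b=(m-1)n$ (so $a+b=-n$, factor $-(2m-1)n-k$) for (4b). In (3b) and (4a) both surviving $f$-values are known nonzero by hypothesis, so the factor must vanish, yielding the single excluded points $-\tfrac{k}{m+1}$ and $\tfrac{k}{2m-1}$. In (3a) and (4b) the surviving product also contains $f(-n)$; if $f(-n)\neq0$ the factor gives $\tfrac{k}{m+1}$, respectively $-\tfrac{k}{2m-1}$, while the branch $f(-n)=0$ is killed by the reflection identity, which forces $n=-\tfrac{k}{2}$, exactly the extra excluded point in those two cases.

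The hard part will be the choice of $b$, not the algebra it triggers. The naive split $mn=(m-1)n+n$ is useless, because at $m=2$ its two arguments coincide and~(\ref{weight 0}) degenerates to $0=0$; the remedy is to place the target $Nn$ itself in one slot (so that $f(Nn)=0$ annihilates a whole term) and to steer $a+b$ onto a known nonzero point, which is what makes the scheme uniform in $m\geq2$. The remaining care is bookkeeping: deciding when the companion slot can also be taken to be a known point (no reflection step, one excluded point) versus when it is forced to be $-n$ (reflection step, extra excluded point $-\tfrac{k}{2}$), and verifying that each linear factor $a-b-k$ vanishes precisely at the point removed in the statement.
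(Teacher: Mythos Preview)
Your argument is correct. The paper itself does not give a proof but only cites \cite{bas}, Proposition~2.7; your proposal supplies exactly the kind of argument that reference contains, namely targeted substitutions into equation~(\ref{weight 0}) together with the reflection step $f(n)\neq 0,\ f(-n)=0\Rightarrow n=-\tfrac{k}{2}$, so the approaches coincide.
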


\begin{proof}
Similarly to the proof [\cite{bas}. Proposition 2.7]
% {\begin{enumerate}
% \item follows immediately by definition.
% \item In fact, it is straightforward to check that $0\in (\mathcal{J}\setminus \{-\tfrac{k}{2m}\})\cap \mathcal{J}_m$
% and $0\in \mathcal{J}_{-m}$ for $m\neq 0$.
% Let $n\neq 0$ and $n\in (\mathcal{J}\setminus \{-\tfrac{k}{2m}\})\cap \mathcal{J}_m$.
% To prove (2), we only need to show
% that $-nm\in \mathcal{J}$. Otherwise,  $-nm\in \mathcal{I}$. Then by Lemma~\ref{lemma2.5},  we have  $nm-nm=0\in \mathcal{I}$,
% which is a contradiction with the assumption that $f(0)\neq 0$.
% \item Let $\forall n \in \mathcal{J}\setminus \lbrace \frac{-k}{2}, \frac{k}{m+1} \rbrace \bigcap \mathcal{J}_{m-1}.$ Assume that $nm \in \mathcal{I}.$ Then by Lemma \ref{lemma2.5},\\
%  $(nm-n)+n=nm \in \mathcal{I},$ implies $nm-n \in \mathcal{I}$   contradiction. Thus $nm \in \mathcal{J}\Rightarrow n \in \mathcal{J}_{m} $
% \item Let $\forall n \in (\mathcal{J}\setminus \{\tfrac{k}{2m-1}\})\cap \mathcal{J}_{1-m} \Rightarrow n-nm \in \mathcal{J}, n\in \mathcal{J} .$ Let $nm \in \mathcal{I}.$ Then by Lemma \ref{lemma2.5}, $n-nm+nm=n \in \mathcal{I}$ which is contradiction. So, $nm \in J \Rightarrow n \in J_{m}.$
% \end{enumerate}
% }
\end{proof}

\begin{cor}\label{cor2.7} $
\mathcal{J}\setminus \{-\tfrac{k}{2}\}\subset \bigcap\limits_{m\in \mathbb{Z}} \mathcal{J}_m.$
\end{cor}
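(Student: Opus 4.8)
The plan is to prove the equivalent statement that $\mathcal{J}\setminus\{-\tfrac{k}{2}\}\subset \mathcal{J}_m$ for every $m\in\mathbb{Z}$, by induction on $|m|$, feeding the four inclusions of Proposition~\ref{prop2.6} into one another. The base of the induction is immediate: part~(1) gives $\mathcal{J}_0=\mathcal{J}_1=\mathcal{J}\supset \mathcal{J}\setminus\{-\tfrac{k}{2}\}$, and the ``in particular'' clause of part~(2) gives $\mathcal{J}\setminus\{-\tfrac{k}{2}\}\subset \mathcal{J}_{-1}$. Thus the claim holds for $m\in\{-1,0,1\}$. Throughout, the one bookkeeping point to keep straight is that a ``removed'' fraction only matters when it is an integer lying in $\mathcal{J}$, so all the set differences below are harmless once the relevant fractions are seen to be pairwise distinct, which follows from $k\neq 0$.

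For the inductive step I would establish the statement for $\mathcal{J}_m$ and $\mathcal{J}_{-m}$ simultaneously, assuming it for all indices of strictly smaller absolute value. The idea is that Proposition~\ref{prop2.6} provides \emph{two} routes into $\mathcal{J}_m$: the first inclusion of part~(3), fed by $\mathcal{J}_{m-1}$, yields $\mathcal{J}\setminus\{-\tfrac{k}{2},\tfrac{k}{m+1}\}\subset \mathcal{J}_m$, while the first inclusion of part~(4), fed by $\mathcal{J}_{1-m}=\mathcal{J}_{-(m-1)}$, yields $\mathcal{J}\setminus\{-\tfrac{k}{2},\tfrac{k}{2m-1}\}\subset \mathcal{J}_m$. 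Since both $\mathcal{J}_{m-1}$ and $\mathcal{J}_{-(m-1)}$ contain $\mathcal{J}\setminus\{-\tfrac{k}{2}\}$ by the inductive hypothesis, the union of the two resulting inclusions covers every element of $\mathcal{J}$ except those lying in $\{\tfrac{k}{m+1}\}\cap\{\tfrac{k}{2m-1}\}$ together with $-\tfrac{k}{2}$. These two fractions coincide only when $m+1=2m-1$, i.e.\ $m=2$; hence for $m\geq 3$ the two routes already give $\mathcal{J}\setminus\{-\tfrac{k}{2}\}\subset \mathcal{J}_m$, and the symmetric use of the \emph{second} inclusions of parts~(3) and~(4) (whose exceptional points are $-\tfrac{k}{m+1}$ and $-\tfrac{k}{2m-1}$, again distinct for $m\neq 2$) gives $\mathcal{J}\setminus\{-\tfrac{k}{2}\}\subset \mathcal{J}_{-m}$.

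The main obstacle is precisely the borderline index $m=2$, where both routes leave the single potential gap $\tfrac{k}{3}$ (resp.\ $-\tfrac{k}{3}$ for $\mathcal{J}_{-2}$); this is the only place where the general scheme fails to close on its own. To remove these gaps I would first record the partial inclusions $\mathcal{J}\setminus\{-\tfrac{k}{2},\tfrac{k}{3}\}\subset \mathcal{J}_2$ and $\mathcal{J}\setminus\{-\tfrac{k}{2},-\tfrac{k}{3}\}\subset \mathcal{J}_{-2}$ obtained as above, and then invoke part~(2) a second time with a different multiplier: applying it with $m=-2$ transports $\mathcal{J}_{-2}$ into $\mathcal{J}_2$ with the \emph{new} exceptional point $\tfrac{k}{4}$, giving $\mathcal{J}\setminus\{-\tfrac{k}{2},-\tfrac{k}{3},\tfrac{k}{4}\}\subset \mathcal{J}_2$. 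Intersecting the excluded sets $\{-\tfrac{k}{2},\tfrac{k}{3}\}$ and $\{-\tfrac{k}{2},-\tfrac{k}{3},\tfrac{k}{4}\}$ leaves only $-\tfrac{k}{2}$, so $\mathcal{J}\setminus\{-\tfrac{k}{2}\}\subset \mathcal{J}_2$; the symmetric application with $m=2$ closes $\mathcal{J}_{-2}$. With $m=\pm 2$ now fully established, the induction for $|m|\geq 3$ runs without further exceptions, and intersecting over all $m$ yields $\mathcal{J}\setminus\{-\tfrac{k}{2}\}\subset \bigcap_{m\in\mathbb{Z}}\mathcal{J}_m$.
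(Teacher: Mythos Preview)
Your argument is correct and is precisely the induction on $|m|$ that Proposition~\ref{prop2.6} is set up to feed, including the special handling of $m=\pm 2$ via the extra application of part~(2); this is the same approach the paper defers to in \cite{bas}, Corollary~2.8. The only cosmetic point is that your remark about the excluded fractions being ``pairwise distinct'' could be stated more plainly as: whenever such a fraction is not an integer it is vacuously absent from $\mathcal{J}\subset\mathbb{Z}$, and when it is an integer the two routes exclude different points since $k\neq 0$.
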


\begin{proof}
Similarly to the proof [\cite{bas}. Corollary 2.8]
\end{proof}

\begin{prop}\label{prop2.8} For the set $\mathcal{I}_m$, we have the following:
\begin{enumerate}
  \item $\mathcal{I}_0=\mathcal{J}$.
  \item $\mathcal{J}\setminus \{-\tfrac{k}{2},\tfrac{k}{m}\}\subset \mathcal{I}_m$ for any $m\neq 0$.
  \item $\mathcal{J}\setminus \{-\tfrac{k}{2},-\tfrac{k}{2m}\}\subset \mathcal{I}_m$ for any $m\neq 0$.
\end{enumerate}
\end{prop}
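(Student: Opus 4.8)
The plan is to prove the three items separately, all driven by the defining functional equation~(\ref{weight 0}) evaluated either at the known zeros of $f$ or at multiples of a fixed element of $\mathcal{J}$.

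For item (1) I would first record that $-k\in\mathcal{I}$: putting $m=-k$ in the relation $f(m)\big(f(0)(2m-k)+(m+k)f(m)\big)=0$ annihilates the factor $m+k$ and leaves $-3k\,f(0)f(-k)=0$, so $f(-k)=0$ since $f(0)\neq 0$ and $k\neq 0$. Substituting $(m,n)=(k,-k)$ into~(\ref{weight 0}) and using $f(-k)=0$ then collapses the identity to $3k\,f(0)f(k)=0$, whence $f(k)=0$ and $k\in\mathcal{I}$. Because $\mathcal{I}_0=\{\,n\in\mathcal{J}\mid k\in\mathcal{I}\,\}$ straight from the definition, the membership $k\in\mathcal{I}$ gives $\mathcal{I}_0=\mathcal{J}$.

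For items (2) and (3), fix $m\neq 0$ and $n\in\mathcal{J}$ with $n\neq-\tfrac{k}{2}$. By Corollary~\ref{cor2.7} every multiple of $n$ lies in $\mathcal{J}$; in particular $mn\in\mathcal{J}$, so $f(mn)\neq 0$. The assertion $n\in\mathcal{I}_m$ is exactly $f(mn+k)=0$, and I would obtain it by feeding into~(\ref{weight 0}) a pair of arguments summing to $mn+k$, chosen so that the left-hand side vanishes while the coefficient of $f(mn+k)$ survives. For item (2) the pair $(k,mn)$ works: since $f(k)=0$, the identity reduces to $mn\,f(mn)\,f(mn+k)=0$, and $f(mn)\neq 0$ forces $f(mn+k)=0$ provided $mn\neq 0$ (the case $mn=0$ being trivial, as then $mn+k=k\in\mathcal{I}$). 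This substitution genuinely requires $mn\in\mathcal{J}$, which breaks down precisely at $n=\tfrac{k}{m}$, where $mn=k\in\mathcal{I}$; hence the exclusion of $\tfrac{k}{m}$. Item (3) is handled by the analogous evaluation anchored at a second known zero of $f$ (using $f(-k)=0$, or $f(\tfrac{k}{2})=0$), which degenerates instead at $n=-\tfrac{k}{2m}$, where $mn=-\tfrac{k}{2}$ and the target $mn+k=\tfrac{k}{2}$ is itself a zero of $f$.

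The one-line evaluations of~(\ref{weight 0}) are routine; the delicate part — and the main obstacle — is the bookkeeping at the exceptional arguments. I would need to verify that at $n=\tfrac{k}{m}$ and $n=-\tfrac{k}{2m}$ the selected pair really degenerates, check that these points are consistent with $\mathcal{I}\cap\mathcal{J}=\varnothing$ and Corollary~\ref{cor2.7} (so that, aside from a possible coincidence with $-\tfrac{k}{2}$, they are not members of $\mathcal{J}$ in the first place), and confirm that the two excluded sets meet only in $-\tfrac{k}{2}$. The last point is what matters downstream: together (2) and (3) then yield $\mathcal{J}\setminus\{-\tfrac{k}{2}\}\subset\mathcal{I}_m$, the form in which this proposition is used.
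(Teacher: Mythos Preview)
Your argument is sound and follows the same route the paper inherits from \cite{bas}: establish $\pm k\in\mathcal{I}$ for item~(1), and for items~(2)--(3) invoke Corollary~\ref{cor2.7} to get $mn\in\mathcal{J}$ (hence $f(mn)\neq 0$), then read off $mn+k\in\mathcal{I}$ from~(\ref{weight 0}). Your substitution $(k,mn)$ is exactly the specialisation of Lemma~\ref{lemma2.5} to the pair $(k,mn)$, so the two presentations coincide.

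One remark worth making explicit: your item~(2) argument already yields the full conclusion $\mathcal{J}\setminus\{-\tfrac{k}{2}\}\subset\mathcal{I}_m$. The excluded value $\tfrac{k}{m}$ can never lie in $\mathcal{J}\setminus\{-\tfrac{k}{2}\}$, because Corollary~\ref{cor2.7} would then force $k=m\cdot\tfrac{k}{m}\in\mathcal{J}$, contradicting $k\in\mathcal{I}$. Consequently item~(3) (and Corollary~\ref{cor2.9}) is an immediate by-product, and you do not actually need the ``analogous evaluation anchored at a second known zero'' that you only sketch for~(3); this is fortunate, since neither the pair built on $-k$ nor on $\tfrac{k}{2}$ produces the exclusion $-\tfrac{k}{2m}$ in a single step the way you suggest. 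Replacing that sketch by the observation above closes the write-up with no extra work.
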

\begin{proof}
Similarly to the proof [\cite{bas}. Proposition 2.9]
% (1) follows from the fact that $k\in \mathcal{I}$.  We only give a detailed proof of (3) and the proof of (2) is similar.

% Let $m$ be a fixed non-zero integer.  Since $0\in \mathcal{J}$ and $k\in \mathcal{I}$, we show that $0\in \mathcal{I}_m$.
% Let $n_0$ be an arbitrary nonzero integer in $\mathcal{J}\setminus \{\tfrac{-k}{2},\tfrac{-k}{2m}\}$.
% Then we have $k+mn_0\neq -mn_0$ and
% $k+mn_0\neq -mn_0+k$. By Corollary~\ref{cor2.7}, we have $-mn_0\in \mathcal{J}$. Hence by Lemma~\ref{lemma2.5} and since $mn_0+k-mn_0=k\in \mathcal{I}$, we have
% $mn_0+k\in \mathcal{I}$.
\end{proof}

By Proposition~\ref{prop2.8}, we get the following result.
\begin{cor}\label{cor2.9} $
\mathcal{J}\setminus \{-\tfrac{k}{2}\}\subset \bigcap\limits_{m\in \mathbb{Z}} \mathcal{I}_m.$
\end{cor}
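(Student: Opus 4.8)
The plan is to prove the inclusion $\mathcal{J}\setminus\{-\tfrac{k}{2}\}\subset \mathcal{I}_m$ for each individual integer $m$ and then simply intersect over all $m$. The case $m=0$ is immediate: part (1) of Proposition~\ref{prop2.8} gives $\mathcal{I}_0=\mathcal{J}$, so trivially $\mathcal{J}\setminus\{-\tfrac{k}{2}\}\subset\mathcal{J}=\mathcal{I}_0$. Thus all the content lies in the range $m\neq 0$.

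For $m\neq 0$ I would play the two inclusions in parts (2) and (3) of Proposition~\ref{prop2.8} against each other. Part (2) supplies $\mathcal{J}\setminus\{-\tfrac{k}{2},\tfrac{k}{m}\}\subset\mathcal{I}_m$ and part (3) supplies $\mathcal{J}\setminus\{-\tfrac{k}{2},-\tfrac{k}{2m}\}\subset\mathcal{I}_m$; each one fails to control only a single extra point beyond $-\tfrac{k}{2}$. Since $\mathcal{I}_m$ contains both left-hand sides, it contains their union, and by De Morgan
\[
\Big(\mathcal{J}\setminus\{-\tfrac{k}{2},\tfrac{k}{m}\}\Big)\cup\Big(\mathcal{J}\setminus\{-\tfrac{k}{2},-\tfrac{k}{2m}\}\Big)=\mathcal{J}\setminus\Big(\{-\tfrac{k}{2},\tfrac{k}{m}\}\cap\{-\tfrac{k}{2},-\tfrac{k}{2m}\}\Big)\subset\mathcal{I}_m.
\]

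The only point that genuinely needs checking — and the crux of the whole argument — is that the intersection of the two excluded sets collapses to $\{-\tfrac{k}{2}\}$. This rests on the observation that the two exceptional points can never coincide: the equality $\tfrac{k}{m}=-\tfrac{k}{2m}$ would force $2=-1$ (recall $k\neq 0$), which is absurd. Consequently no element of $\mathcal{J}$ other than $-\tfrac{k}{2}$ is discarded by both inclusions at once, the intersection above equals exactly $\{-\tfrac{k}{2}\}$, and so $\mathcal{J}\setminus\{-\tfrac{k}{2}\}\subset\mathcal{I}_m$ for every $m\neq 0$. Combining this with the $m=0$ case and intersecting over all $m\in\mathbb{Z}$ yields $\mathcal{J}\setminus\{-\tfrac{k}{2}\}\subset\bigcap_{m\in\mathbb{Z}}\mathcal{I}_m$. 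I expect no real obstacle here: the corollary is a clean two-exception covering argument, entirely parallel to the passage from Proposition~\ref{prop2.6} to Corollary~\ref{cor2.7}.
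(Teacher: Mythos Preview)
Your argument is correct and is exactly the deduction the paper has in mind: the paper offers no proof beyond the sentence ``By Proposition~\ref{prop2.8}, we get the following result,'' and your union/De~Morgan step with the observation $\tfrac{k}{m}\neq -\tfrac{k}{2m}$ is precisely how one extracts the corollary from parts (1)--(3) of that proposition. There is nothing to add; your write-up simply makes explicit what the paper leaves implicit.
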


\begin{prop}\label{prop2.10} Let $n\in \mathcal{J}\setminus \{-\tfrac{k}{2}\}$ and $n\ne 0$. Then we have $n\nmid k$, and
for any $m\in \mathbb{Z}$,
\begin{enumerate}
  \item if $m\in \mathcal{I}$, then $m+n\mathbb{Z}\subset \mathcal{I}$;
  \item if $m\in \mathcal{J}$, then $m+n\mathbb{Z}\subset \mathcal{J}$.
\end{enumerate}
\end{prop}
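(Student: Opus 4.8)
The plan is to first extract from the two corollaries the precise arithmetic structure attached to $n$, and then to upgrade the local equivalence of Lemma~\ref{lemma2.5} into a global statement about the coset $m+n\mathbb{Z}$. Fix $n\in\mathcal{J}\setminus\{-\tfrac{k}{2}\}$ with $n\neq 0$; since $n$ then avoids both $\tfrac{k}{2}$ (by definition of $\mathcal{J}$) and $-\tfrac{k}{2}$, Corollary~\ref{cor2.7} applies and gives $n\in\mathcal{J}_m$ for every $m\in\mathbb{Z}$, that is, $mn\in\mathcal{J}$ for all $m$. Hence the whole subgroup $n\mathbb{Z}\subset\mathcal{J}$ (in particular $0\in\mathcal{J}$, taking $m=0$). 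Likewise Corollary~\ref{cor2.9} gives $n\in\mathcal{I}_m$ for all $m$, i.e. $mn+k\in\mathcal{I}$ for all $m$, so the coset $k+n\mathbb{Z}\subset\mathcal{I}$. These two facts are the only inputs I will need beyond Lemma~\ref{lemma2.5}.

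Next I would dispose of the divisibility claim by contradiction. Suppose $n\mid k$, say $k=qn$ for some integer $q$. Then $k\in n\mathbb{Z}\subset\mathcal{J}$, while at the same time $k=0\cdot n+k\in k+n\mathbb{Z}\subset\mathcal{I}$. This forces $k\in\mathcal{I}\cap\mathcal{J}=\varnothing$, which is impossible; therefore $n\nmid k$.

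The core of the argument is the claim that the Lemma~\ref{lemma2.5} equivalence holds \emph{without} its two excluded indices, namely that $p\in\mathcal{I}\iff p+n\in\mathcal{I}$ for every $p\in\mathbb{Z}$. For $p\neq n,\,n+k$ this is Lemma~\ref{lemma2.5} verbatim (with the lemma's fixed element played by our $n\in\mathcal{J}$ and its variable taken to be $p$). The two excluded values are settled directly from the first paragraph: at $p=n$, both $n$ and $n+n=2n$ lie in $n\mathbb{Z}\subset\mathcal{J}$, so neither belongs to $\mathcal{I}$ and the biconditional holds with both sides false; at $p=n+k$, both $n+k$ and $(n+k)+n$ lie in $k+n\mathbb{Z}\subset\mathcal{I}$, so both sides are true. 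Hence the equivalence is valid for all $p$.

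From this global equivalence both assertions follow by induction along the coset. If $m\in\mathcal{I}$, applying it repeatedly forwards (with $p=m,\,m+n,\dots$) and backwards (with $p=m-n,\,m-2n,\dots$) yields $m+jn\in\mathcal{I}$ for every $j\in\mathbb{Z}$, i.e. $m+n\mathbb{Z}\subset\mathcal{I}$, which is (1). Since $\mathcal{I}$ and $\mathcal{J}$ partition $\mathbb{Z}$, the equivalence for $\mathcal{I}$ is equivalent to $p\in\mathcal{J}\iff p+n\in\mathcal{J}$, and the same induction gives (2). I expect the only delicate point to be the passage across the two exceptional indices of Lemma~\ref{lemma2.5}; the content of the first paragraph ($n\mathbb{Z}\subset\mathcal{J}$ and $k+n\mathbb{Z}\subset\mathcal{I}$) is exactly what makes that passage automatic, so no genuine obstacle should remain once those two inclusions are secured.
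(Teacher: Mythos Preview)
Your argument is correct and is essentially the intended one: the paper defers to \cite{bas}, Proposition~2.11, and your proof uses exactly the machinery (Lemma~\ref{lemma2.5}, Corollary~\ref{cor2.7}, Corollary~\ref{cor2.9}) that the present paper sets up in parallel to that reference. The handling of the two exceptional indices $p=n$ and $p=n+k$ via $n\mathbb{Z}\subset\mathcal{J}$ and $k+n\mathbb{Z}\subset\mathcal{I}$, followed by the two-sided induction along the coset, matches the standard route.
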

% \begin{proof}
% If $m$ is neither in $n\mathbb{Z}$ nor in $k+n\mathbb{Z}$, the conclusion holds due to Lemma~\ref{lemma2.5}.
% On the other hand, by Corollary \ref{cor2.7}, we show that $n\in \bigcap\limits_{l\in \mathbb{Z}} \mathcal{J}_l$.
% Hence $n\mathbb{Z}\subset \mathcal{J}$.
% Furthermore, by Corollary \ref{cor2.9} and the fact that $k\in \mathcal{I}$, we have $n\in \bigcap\limits_{l\in \mathbb{Z}} \mathcal{I}_l$.
% Thus $k+n\mathbb{Z}\subset \mathcal{I}$. Therefore for any $m\in n\mathbb{Z}$ or $m\in k+n\mathbb{Z}$, if $m\in \mathcal{I}$, then $m\in k+n\mathbb{Z}$ and hence
% $m+n\mathbb{Z}\in \mathcal{I}$, and if $m\in \mathcal{J}$, then $m\in n\mathbb{Z}$ and hence $m+n\mathbb{Z}\in \mathcal{J}$.
% Moreover $n\nmid k$. Otherwise we have $n\mathbb{Z}=k+n\mathbb{Z}\subset \mathcal{I}\cap \mathcal{J}$, which is a contradiction.
% \end{proof}
% Own proof:
\begin{proof}
  Similarly to the proof [\cite{bas}. Proposition 2.11]
    % Let $m \notin n\mathbb{Z}$ and $m \notin k+n\mathbb{Z}.$ Now assume $m\in \mathcal{I}.$ From Corollary \ref{cor2.7} $n\in \bigcap_{m\in \mathbb{Z}} \mathcal{J}_m$ it means that $n\mathbb{Z} \subset \mathcal{J}.$ Then by using Lemma \ref{lemma2.5} we get $m+n\mathbb{Z} \subset \mathcal{I}.$

    % Now, $m\in \mathcal{J}.$ Lets assume that $m+n\mathbb{Z} \subset I,$ then by Lemma \ref{lemma2.5}   $n \mathbb{Z} \subset I,$ contradiction. This gives us $m+n\mathbb{Z} \subset J.$

    % Let $m \in n\mathbb{Z}$ or $m \in k+n\mathbb{Z}.$ If $m \in n\mathbb{Z} \Rightarrow$ since Corollary \ref{cor2.9} $m \in \mathcal{J} \Rightarrow m+n\mathbb{Z}\subset n\mathbb{Z} \subset \mathcal{J}.$  If $m \in k+n\mathbb{Z}\Rightarrow$ since Corollary \ref{cor2.7} $m\in \mathcal{I} \Rightarrow m+n\mathbb{Z}\subset k+n\mathbb{Z} \subset \mathcal{I}.$
\end{proof}

For any $m,n\in\mathbb{Z}$ (not both zero), let ${\rm gcd}(m,n)$ denote the greatest common divisor of $m$ and $n$.

\begin{cor}\label{cor2.11} If $n_1\in \mathcal{J}$, $n_2\in \mathcal{J}\setminus \{0,-\tfrac{k}{2}\}$, then ${\rm gcd}(n_1,n_2)\mathbb{Z}\subset \mathcal{J}$.
\end{cor}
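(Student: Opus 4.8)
The plan is to translate the statement into a closure property inside the cyclic group $\mathbb{Z}/n_2\mathbb{Z}$. First I would record two facts about $\mathcal{J}$. Since $f(0)\neq 0$ we have $0\notin\mathcal{I}$, and in fact the defining relation of $\mathcal{J}$ holds at $m=0$ because $f(0)(0-k)+(0+k)f(0)=0$; hence $0\in\mathcal{J}$. Next, because the hypothesis gives $n_2\in\mathcal{J}\setminus\{0,-\tfrac{k}{2}\}$, Proposition~\ref{prop2.10}(2) applies with $n=n_2$ and shows that $\mathcal{J}$ is invariant under translation by $n_2\mathbb{Z}$; since $\mathcal{I}=\mathbb{Z}\setminus\mathcal{J}$, each residue class modulo $n_2$ lies entirely in $\mathcal{I}$ or entirely in $\mathcal{J}$. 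Thus there is a subset $S\subseteq\mathbb{Z}/n_2\mathbb{Z}$ with $\mathcal{J}=\{m : (m\bmod n_2)\in S\}$, and $0\in S$.

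The key maneuver, and the step I expect to cause the only real difficulty, is that I cannot feed $n_1$ directly into Proposition~\ref{prop2.10}, since $n_1$ may equal $0$ or $-\tfrac{k}{2}$ and is then excluded from its hypothesis. To get around this I replace $n_1$ by another representative of its class modulo $n_2$. The coset $n_1+n_2\mathbb{Z}$ is infinite (as $n_2\neq 0$) and lies entirely in $\mathcal{J}$ because $n_1\in\mathcal{J}$; discarding the at most two forbidden values $0$ and $-\tfrac{k}{2}$ still leaves a nonempty set, so I may choose $n_1'\in(n_1+n_2\mathbb{Z})\cap\mathcal{J}$ with $n_1'\neq 0,-\tfrac{k}{2}$. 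Since $n_1'\equiv n_1\pmod{n_2}$ we have $\gcd(n_1',n_2)=\gcd(n_1,n_2)=d$, so the replacement alters neither $d$ nor the target set $d\mathbb{Z}$.

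Finally I would apply Proposition~\ref{prop2.10}(2) with $n=n_1'$: for every $m\in\mathcal{J}$ the whole progression $m+n_1'\mathbb{Z}$ lies in $\mathcal{J}$, which says precisely that $S$ is invariant under adding the class $\overline{n_1'}$ in $\mathbb{Z}/n_2\mathbb{Z}$. As $0\in S$, this forces the cyclic subgroup $\langle\overline{n_1'}\rangle\subseteq S$. Now $\langle\overline{n_1'}\rangle=\gcd(n_1',n_2)\,\mathbb{Z}/n_2\mathbb{Z}=d\,\mathbb{Z}/n_2\mathbb{Z}$ by the standard description of cyclic subgroups of $\mathbb{Z}/n_2\mathbb{Z}$. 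Pulling this back along the projection $\mathbb{Z}\to\mathbb{Z}/n_2\mathbb{Z}$, every multiple of $d$ has its residue in $S$, i.e. $d\mathbb{Z}\subseteq\mathcal{J}$, which is the assertion. The degenerate case $n_1=0$ (where $d=|n_2|$) is absorbed automatically, since then $n_1'\in n_2\mathbb{Z}$ gives $\overline{n_1'}=0$, $\langle 0\rangle=\{0\}$, and $d\mathbb{Z}=n_2\mathbb{Z}\subseteq\mathcal{J}$, consistent with $\gcd(n_1',n_2)=|n_2|$.
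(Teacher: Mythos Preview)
Your argument is correct. The replacement of $n_1$ by a translate $n_1'=n_1+jn_2$ avoiding $0$ and $-\tfrac{k}{2}$ is the right maneuver, and once both $n_1'$ and $n_2$ satisfy the hypotheses of Proposition~\ref{prop2.10}, the passage to $\mathbb{Z}/n_2\mathbb{Z}$ and the identification $\langle\overline{n_1'}\rangle=d\mathbb{Z}/n_2\mathbb{Z}$ finish the job cleanly.

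The paper itself does not spell out a proof here but simply cites the corresponding Corollary~2.12 in \cite{bas} (the Rota--Baxter case), whose argument proceeds along the same lines: invoke the translation invariance of $\mathcal{J}$ from Proposition~\ref{prop2.10} and use B\'ezout/cyclic-group considerations to reach $\gcd(n_1,n_2)\mathbb{Z}\subset\mathcal{J}$. Your write-up makes the reduction modulo $n_2$ explicit, which is a tidy way to package the same idea; there is no substantive difference in strategy.
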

\begin{proof}
Similarly to the proof [\cite{bas}. Corollary 2.12]
% If $n_1\neq \tfrac{-k}{2}$, then by Proposition~\ref{prop2.10}, we show that for every $m_1$, $m_2\in\mathbb{Z}$,
% $n_1m_1+n_2m_2\in \mathcal{J}$. Furthermore, we have ${\rm gcd}(n_1,n_2)\mathbb{Z}=n_1\mathbb{Z}+n_2\mathbb{Z}$. Thus
% ${\rm gcd}(n_1,n_2)\mathbb{Z}\subset \mathcal{J}$.

% If $n_1=\tfrac{-k}{2}\in \mathcal{J}$, then by Proposition~\ref{prop2.10}, we show that $n_1+n_2\in \mathcal{J}$.
% On the other hand, we have $n_2$, $n_1+n_2\in \mathcal{J}\setminus \{\tfrac{-k}{2}\}$. Hence gcd$(n_1+n_2, n_2)\mathbb{Z}\subset \mathcal{J}$.
% Since gcd$(n_1+n_2, n_2)\mathbb{Z}={\rm gcd} (n_1,n_2)\mathbb{Z}$, we have ${\rm gcd} (n_1,n_2)\mathbb{Z}\subset \mathcal{J}$.
\end{proof}

\begin{prop}\label{prop3}
Let $f$ be a $\mathbb C$-valued function defined on $\mathbb{Z}$ satisfying equation~(\ref{weight 0}). Suppose that $f(0)\neq 0$ and $k\ne 0$.
If $\tfrac{-k}{2}\in \mathcal{J}$, then $\mathcal{J}=\{0,-\tfrac{k}{2}\}$, and in this case,
\begin{equation}f(m)=\delta_{m,0}f(0)+4
\delta_{m,\frac{-k}{2}}f(0),\;\;\forall m\in \mathbb{Z}.\end{equation}
\end{prop}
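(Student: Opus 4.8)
The plan is to reduce everything to the single set identity $\mathcal{J}=\{0,-\tfrac{k}{2}\}$, since $f$ vanishes precisely on $\mathcal{I}=\mathbb{Z}\setminus\mathcal{J}$ and is pinned down on $\mathcal{J}$ by the defining relation of that set.

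First I would extract the easy consequences of the condition $f(0)(2m-k)+(m+k)f(m)=0$ that defines $\mathcal{J}$. Taking $m=0$ gives $-kf(0)+kf(0)=0$, so $0\in\mathcal{J}$ (and $0\ne\tfrac{k}{2}$ because $k\ne0$). Taking $m=-\tfrac{k}{2}$ and solving for $f$ gives $f(-\tfrac{k}{2})=4f(0)$, matching the claimed formula. I would also record that $-k\notin\mathcal{J}$: substituting $m=-k$ into the same relation leaves $-3kf(0)$, which is nonzero, so $f(-k)=0$ and $-k\in\mathcal{I}$. Thus it remains only to prove that $\mathcal{J}$ has no element other than $0$ and $-\tfrac{k}{2}$.

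For this I would argue by contradiction, assuming there exists $n_0\in\mathcal{J}\setminus\{0,-\tfrac{k}{2}\}$. Applying Corollary \ref{cor2.11} with $n_1=-\tfrac{k}{2}\in\mathcal{J}$ and $n_2=n_0\in\mathcal{J}\setminus\{0,-\tfrac{k}{2}\}$ yields $d\mathbb{Z}\subset\mathcal{J}$, where $d={\rm gcd}(\tfrac{k}{2},n_0)$ is a positive integer. In particular $d\in\mathcal{J}$ with $d\ne0$, and since $d\mid\tfrac{k}{2}$ we have $d\mid k$. Now Proposition \ref{prop2.10} asserts that every nonzero element of $\mathcal{J}\setminus\{-\tfrac{k}{2}\}$ fails to divide $k$; the only way $d$ can divide $k$ is therefore $d=-\tfrac{k}{2}$. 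But then $-k=2d$ is again a multiple of $d$, so $-k\in d\mathbb{Z}\subset\mathcal{J}$, contradicting $-k\notin\mathcal{J}$.

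This contradiction shows $\mathcal{J}=\{0,-\tfrac{k}{2}\}$, whence $\mathcal{I}=\mathbb{Z}\setminus\{0,-\tfrac{k}{2}\}$, so $f(m)=0$ for $m\ne0,-\tfrac{k}{2}$, while $f(0)$ is the given nonzero constant and $f(-\tfrac{k}{2})=4f(0)$; this is exactly $f(m)=\delta_{m,0}f(0)+4\delta_{m,-k/2}f(0)$. The crux is the number-theoretic step of the previous paragraph: producing from an arbitrary extra element $n_0$ a genuine divisor of $k$ lying in $\mathcal{J}$, and then combining the divisibility obstruction of Proposition \ref{prop2.10} with the fact that $-k\notin\mathcal{J}$ to rule out even the borderline value $d=-\tfrac{k}{2}$. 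The propagation results that do the heavy lifting, namely Corollary \ref{cor2.11} and Proposition \ref{prop2.10}, are already available from the earlier part of the section.
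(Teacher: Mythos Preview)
Your argument is correct and follows the same skeleton as the paper: assume an extra element $n_0\in\mathcal{J}$, feed $(-\tfrac{k}{2},n_0)$ into Corollary~\ref{cor2.11} to get $d\mathbb{Z}\subset\mathcal{J}$ with $d=\gcd(-\tfrac{k}{2},n_0)$, and then invoke the divisibility obstruction of Proposition~\ref{prop2.10}. The only substantive difference is the closing contradiction. The paper, after reaching $d=-\tfrac{k}{2}$, concludes $n_0\in\tfrac{k}{2}\mathbb{Z}$ and then appeals to Lemma~\ref{lemma2.5} together with an induction (seeded by $\pm k\in\mathcal{I}$) to show that every $\tfrac{k}{2}m$ with $m\neq 0,-1$ lies in $\mathcal{I}$. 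You instead exploit the full conclusion $d\mathbb{Z}\subset\mathcal{J}$ of Corollary~\ref{cor2.11}: since $d\mid k$, already $-k\in d\mathbb{Z}\subset\mathcal{J}$, which collides with your direct computation $-k\in\mathcal{I}$. This is shorter and avoids both Lemma~\ref{lemma2.5} and the induction; in fact the step through Proposition~\ref{prop2.10} is not even needed for your version, because $d\mid k$ alone gives $-k\in d\mathbb{Z}$.

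One cosmetic point: you declare $d>0$ and then write ``$d=-\tfrac{k}{2}$'', which is only literally possible when $k<0$. The clean statement is that one of $\pm d$ equals $-\tfrac{k}{2}$, hence $d\mathbb{Z}=\tfrac{k}{2}\mathbb{Z}$; the paper has the same looseness, and it does not affect the logic.
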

\begin{proof}
It is obvious that $\{0, -\tfrac{k}{2}\}\subset \mathcal{J}$. Conversely, if there exists an $n_0\in \mathcal{J}$
such that $n_0\neq 0$, $-\tfrac{k}{2}$,  then by Corollary \ref{cor2.11}, we have
${\rm gcd} (n_0,-\tfrac{k}{2})\mathbb{Z}\subset \mathcal{J}$. Since $\mathcal{J}\neq \mathbb{Z}$, we have
${\rm gcd} (n_0,-\tfrac{k}{2})\neq 1$. Set $d={\rm gcd}(n_0,-\tfrac{k}{2})$. Then $d|\tfrac{-k}{2}$. Hence
$d|k$. By Proposition~\ref{prop2.10}, we show that $d=-\tfrac{k}{2}$. Thus $n_0=\tfrac{k}{2}m_0$
for some $m_0\neq 0$, $-1$. However, by Lemma~\ref{lemma2.5} and induction on $m$ (note that $\pm k\in \mathcal{I}$), one can show that
$\tfrac{k}{2}m\in \mathcal{I}$ for any $m\neq0$, $-1$. It is a contradiction.
Hence $\mathcal{J}=\{0,-\tfrac{k}{2}\}$.
\end{proof}

\begin{prop}\label{prop4}
Let $f$ be a $\mathbb C$-valued function on $\mathbb{Z}$ satisfying equation~(\ref{weight 0}). Suppose that $f(0)\neq 0$ and $k\ne 0$.
If $\tfrac{-k}{2}\notin \mathcal{J}$, and $\{0\}\subsetneqq \mathcal{J}$, then
there exists a non-zero integer $l\in \mathcal{J}$, $l\nmid k$ such that $|l|$ is minimal.
In this case, we have
\begin{align*}
\mathcal{J}=l\mathbb{Z},
\end{align*}
and thus
 \begin{equation}
        f(m)=
        \tfrac{k-2m}{m+k}\delta_{m,l\mathbb{Z}}f(0),
    \end{equation}
where
\[
\delta_{m,l\mathbb{Z}}:=
\sum_{n\in\mathbb{Z}}\delta_{m,ln} =
\begin{cases}
1 & m\in l\mathbb{Z};\\
0 & m\notin l\mathbb{Z}.
\end{cases}
\]
\end{prop}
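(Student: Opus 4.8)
The plan is to produce the generator $l$ directly from a minimality argument, then pin down $\mathcal{J}$ by two inclusions, after which the formula for $f$ follows by solving the defining relation of $\mathcal{J}$. Since $\{0\}\subsetneqq\mathcal{J}$, the set of nonzero elements of $\mathcal{J}$ is nonempty; because $-\tfrac{k}{2}\notin\mathcal{J}$, every such element lies in $\mathcal{J}\setminus\{-\tfrac{k}{2}\}$, so Proposition~\ref{prop2.10} applies to each of them and in particular yields $n\nmid k$. Choosing $l$ to be a nonzero element of $\mathcal{J}$ whose absolute value $|l|$ is minimal then gives at once the existence claim and the property $l\nmid k$.

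For the inclusion $l\mathbb{Z}\subset\mathcal{J}$ I would invoke Proposition~\ref{prop2.10}(2) with $n=l$ and $m=l\in\mathcal{J}$, which produces $l+l\mathbb{Z}=l\mathbb{Z}\subset\mathcal{J}$ (and $0\in\mathcal{J}$ is already part of the hypothesis). For the reverse inclusion I take an arbitrary nonzero $n\in\mathcal{J}$; then $n\in\mathcal{J}\setminus\{0,-\tfrac{k}{2}\}$, and Corollary~\ref{cor2.11} applied to $n_1=l$, $n_2=n$ gives ${\rm gcd}(l,n)\mathbb{Z}\subset\mathcal{J}$. Hence $d={\rm gcd}(l,n)$ is itself a nonzero element of $\mathcal{J}$ with $0<d\le|l|$, and minimality of $|l|$ forces $d=|l|$, so $l\mid n$ and $n\in l\mathbb{Z}$. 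Together with the trivial case $n=0$ this proves $\mathcal{J}\subset l\mathbb{Z}$, whence $\mathcal{J}=l\mathbb{Z}$.

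It then remains to compute $f$. On $\mathcal{I}=\mathbb{Z}\setminus l\mathbb{Z}$ we have $f(m)=0$ by definition, matching $\delta_{m,l\mathbb{Z}}=0$; on $\mathcal{J}=l\mathbb{Z}$ the defining relation $f(0)(2m-k)+(m+k)f(m)=0$ can be solved for $f(m)$, giving $f(m)=\tfrac{k-2m}{m+k}f(0)$. Here one must check that the denominator never vanishes on $\mathcal{J}$, i.e. that $-k\notin\mathcal{J}$, which holds because substituting $m=-k$ into the defining relation yields $-3k\,f(0)=0$, impossible since $f(0)\neq0$ and $k\neq0$. Combining the two cases gives exactly $f(m)=\tfrac{k-2m}{m+k}\delta_{m,l\mathbb{Z}}f(0)$. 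I expect the only genuinely delicate point to be the reverse inclusion: one must make sure the gcd argument via Corollary~\ref{cor2.11} is legitimately applicable (its second argument must avoid $0$ and $-\tfrac{k}{2}$, which is precisely what the hypothesis $-\tfrac{k}{2}\notin\mathcal{J}$ guarantees) and that minimality is applied against $|l|$ rather than against $l$ itself.
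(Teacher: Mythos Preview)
Your proof is correct and follows essentially the same approach as the paper: pick a nonzero $l\in\mathcal{J}$ of minimal absolute value, use Proposition~\ref{prop2.10} (you via Corollary~\ref{cor2.11}, the paper directly) together with minimality to get $\mathcal{J}=l\mathbb{Z}$, and read off $f$ from the defining relation of $\mathcal{J}$. Your write-up is in fact more detailed than the paper's, which omits the check that $-k\notin\mathcal{J}$ (equivalently $l\nmid k$, already recorded) needed for the denominator, and simply asserts ``the conclusion holds'' for the formula.
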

\begin{proof}
Since $ \{0\}\subsetneqq \mathcal{J}$, there exists an integer $l\in \mathcal{J}$ such that $l\neq 0$, $|l|$ is minimal and $l\nmid k$.
By Proposition~\ref{prop2.10} and the minimality of $|l|$, we have $m\in \mathcal{I}$ for any $m\notin l\mathbb{Z}$.
On the other hand, since $0\in \mathcal{J}$ and by Proposition~\ref{prop2.10}, we have $l\mathbb{Z}\subset \mathcal{J}$. Hence $\mathcal{J}=l\mathbb{Z}$ and thus the conclusion holds.
\end{proof}

In summary, we obtain the following Theorem.

\begin{thm}\label{Thm2.14}
A homogeneous anti-Rota-Baxter operator on the Witt algebra $W$ has one of the following forms:
\begin{enumerate}
	\item[(I)] $R_k^{\alpha}(L_m) = \alpha\delta_{m+2k,0}L_{m+k},\;\;\forall m\in\mathbb{Z}$,
	where $k\in\mathbb{Z}$ and $\alpha \in \mathbb C$.
	
	\item[(II)] $R_{2k}^{'\beta}(L_m) = (\beta\delta_{m+2k,0}+4\beta\delta_{m+3k,0})L_{m+2k},\;\;\forall m\in\mathbb{Z}$,
	where $k\in\mathbb{Z}^{\ast}$ and $\beta \in \mathbb{C}^{\ast}$.

	\item[(III)] $R_k^{l,\gamma}(L_m) = \tfrac{k-2m}{m+2k}\gamma\delta_{m+k,l\mathbb{Z}} L_{m+k},\;\;\forall m\in\mathbb{Z}$,
	where $k,l\in\mathbb{Z}^{\ast}$, $l\nmid k$ and $\gamma \in \mathbb{C}^{\ast}$.
\end{enumerate}
Moreover,
\begin{enumerate}
	\item $\left\{R_0^{\alpha}
	\middle|\alpha\in\mathbb{C}
	\right\}$ are all the homogeneous anti-Rota-Baxter
	operators with degree $0$ on the Witt algebra $W$.
	\item If $k\neq 0$ and is odd, then
	$\left\{R_k^{\alpha}, R_k^{l,\beta}
	\middle|\alpha\in\mathbb{C}, \beta\in\mathbb{C}^{\ast}, l\in\mathbb{Z}^{\ast}, l\nmid k
	\right\}$
	are all the homogeneous anti-Rota-Baxter
	operators   with degree $k$ on $W$.
	\item If $k\neq 0$ and is even, then
	$\left\{R_k^{\alpha}, R_k^{'\beta}, R_k^{l,\gamma}
	\middle|\alpha\in\mathbb{C}, \beta,\gamma\in\mathbb{C}^{\ast}, l\in\mathbb{Z}^{\ast}, l\nmid k
	\right\}$
	are all the homogeneous anti-Rota-Baxter
	operators   with degree $k$ on $W$.
\end{enumerate}
\end{thm}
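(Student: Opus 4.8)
The plan is to assemble the classification from the case analysis already carried out in Propositions~\ref{prop1}--\ref{prop4}, and then to close with a converse check that each listed operator genuinely solves~(\ref{weight 0}). I use throughout that a homogeneous $R_k$ of degree $k$ is an anti-Rota-Baxter operator if and only if its symbol $f$ from~(\ref{eq:HRB}) satisfies~(\ref{weight 0}); so it suffices to describe all admissible $f$ and then translate back via $R_k(L_m)=f(m+k)L_{m+k}$. For necessity I would run a decision tree on $k$ and on $f(0)$: for $k=0$, Proposition~\ref{prop1} gives $f(m)=\alpha\delta_{m,0}$, which is form (I) at $k=0$; for $k\neq0$ with $f(0)=0$, Proposition~\ref{prop2} gives $f(m)=\alpha\delta_{m,-k}$, becoming form (I) after the substitution; and for $k\neq0$ with $f(0)\neq0$ I split on whether $-\tfrac{k}{2}\in\mathcal{J}$. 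In the affirmative case, which forces $k$ even since $-\tfrac{k}{2}$ must be an integer, Proposition~\ref{prop3} fixes $f$ and relabelling the degree as $2k$ yields form (II); in the negative case, provided $\{0\}\subsetneq\mathcal{J}$, Proposition~\ref{prop4} gives $\mathcal{J}=l\mathbb{Z}$ with $l\nmid k$ and the rational symbol, which the same substitution carries to form (III).

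The three ``Moreover'' statements then follow from elementary observations: form (II) can occur only when $-\tfrac{k}{2}\in\mathbb{Z}$, i.e. for even nonzero degree; form (III) needs some $l$ with $l\nmid k$, impossible at $k=0$ since every integer divides $0$; and form (II) requires the degree to be nonzero, so only form (I) survives at $k=0$. I would also isolate the residual possibility $f(0)\neq0$ with $\mathcal{J}=\{0\}$, which is reached by neither Proposition~\ref{prop3} nor~\ref{prop4} (both presume a nonzero member of $\mathcal{J}$); here $f(m)=f(0)\delta_{m,0}$, so $R_k(L_{-k})=f(0)L_0$, and I would check carefully whether this coincides with a listed operator or must be recorded separately.

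Finally I would verify that each operator in (I)--(III) really satisfies~(\ref{weight 0}) for all $m,n\in\mathbb{Z}$, and this is where I expect the real work. For (I) and (II) the symbol is supported on one or two integers, so the identity reduces to finitely many coefficient relations while every other pair $(m,n)$ makes both sides vanish. The genuine obstacle is form (III): its symbol lives on the whole progression $l\mathbb{Z}$ and is a nonconstant rational function of the index, so I must confirm~(\ref{weight 0}) for all $m,n\in l\mathbb{Z}$, not merely for the specialization $n=0$ that defined $\mathcal{J}$. Writing $m=la$, $n=lb$ and clearing denominators reduces this to a polynomial identity in $a,b$ that I would verify in full. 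I single this out as the hard part because the sign in the anti-Rota-Baxter identity changes the symbol, relative to the Rota-Baxter computation of~\cite{bas}, from $\tfrac{k}{m+k}f(0)$ to $\tfrac{k-2m}{m+k}f(0)$, and it is precisely here that compatibility with the complete equation, rather than only its $n=0$ reduction, must be re-established.
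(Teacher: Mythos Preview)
Your plan coincides with the paper's proof, which in its entirety reads: ``The first part follows from Propositions~\ref{prop1}, \ref{prop2}, \ref{prop3} and \ref{prop4}. The second part can be directly verified.'' Your decision tree on $(k,f(0))$ and your converse verification are exactly this programme, carried out with more care.

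Your caution about the residual case $k\neq0$, $f(0)\neq0$, $\mathcal{J}=\{0\}$ is well placed and is not addressed by the paper either: Propositions~\ref{prop3} and~\ref{prop4} both presuppose a nonzero element of $\mathcal{J}$, so this branch falls through the case split. The symbol $f(m)=f(0)\delta_{m,0}$ does satisfy~(\ref{weight 0}) for every $k$ (one checks that both sides vanish for all $m,n$), producing the operator $L_{-k}\mapsto f(0)L_0$ and $L_m\mapsto 0$ for $m\neq -k$. For $k\neq0$ this has symbol supported at $\{0\}$, whereas forms (I), (II), (III) have symbols supported at $\{-k\}$, at $\{0,-k/2\}$ with both values nonzero, and on an infinite progression $l\mathbb{Z}$ respectively; so this operator is genuinely absent from the list as stated, and you should record it separately rather than expect it to collapse into one of the given forms.
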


\begin{proof}
The first part follows from Propositions\ref{prop1}, \ref{prop2}, \ref{prop3} and \ref{prop4}. The second part
can be directly verified.
\end{proof}

\subsection{Homogeneous anti-Rota-Baxter operators on the Virasoro algebra}

\begin{defn}
The {\it Virasoro algebra} $V$ is a Lie algebra with the basis
$\{L_m,C|m\in\mathbb{Z}\}$ satisfying the following relations:
\begin{equation}\label{Virasoro_1}
    [L_m,L_n]=(m-n)L_{m+n}+\tfrac{m^3-m}{12}\delta_{m+n,0}C, \quad   \forall m,n \in \mathbb{Z}.
\end{equation}
\end{defn}
%\begin{equation}\label{Virasoro_2}[L_m,C]=0,\;\;\forall m\in\mathbb{Z}.\end{equation}

The Virasoro algebra $V$ is a central extension of the Witt algebra $W$,
and has a natural $\mathbb{Z}$-grading as well:
\begin{align*}
V=\bigoplus_{n\in \mathbb{Z}} V_n,
\end{align*}
where $V_n=\operatorname{span}\{L_n\}$ for $n\in \mathbb{Z}^{\ast}$ and $V _0=\operatorname{span}\{L_0, C\}$.

% \begin{rem}\label{W and V}
% 	The Witt algebra $W$ is a graded quotient of the Virasoro algebra $V$.
% 		Any linear graded operator on $W$ can be lifted to that of $V$ by mapping $\mathbb CC$ to $0$.
% 	Conversely, any linear graded operator $F$ on $V$ can be restricted to a linear operator on $W$ by forgetting the image of $\mathbb CC$.
% 	If the kernel of $F$ contains the center, then the two can be identified.
% \end{rem}

% \begin{defn}\label{homogeneous R-B on Vir}
% 	Let $k$ be an integer. A {\it homogeneous operator $F$ with degree $k$} on the Virasoro algebra $V$ is a linear operator on $V$ satisfying
% 	\[
% 		F(V_m)\subset V_{m+k},\;\;
% 		\forall m\in\mathbb{Z}.
% 	\]
% \end{defn}

Hence {\it homogeneous anti-Rota-Baxter operator $R_k$ with degree $k$} on the Virasoro algebra $V$ is an operator on $V$ with the following form:
\begin{equation}\label{eq:RBV1}
R_k(L_m)=f(m+k)L_{m+k}+\theta\delta_{m+k,0}C,\;\;\forall m\in\mathbb{Z};
\end{equation}
\begin{equation}\label{eq:RBV2}
R_k(C)=\mu L_k+\nu\delta_{k,0}C,
\end{equation}
where $f$ is a $\mathbb C$-valued function defined on $\mathbb{Z}$ and $\theta, \mu, \nu\in \mathbb C$.

\begin{thm}\label{Thm3.3}
A homogeneous anti-Rota-Baxter operator $R_0$  with degree $0$ on the Virasoro algebra $V$ must be of the form
\begin{align*}
    R_0^{\alpha,\theta,\mu,\nu}(L_m)&=\delta_{m,0}(\alpha L_m+\theta C),\;\;\forall m\in \mathbb{Z},\\
    R_0^{\alpha,\theta,\mu,\nu}(C)&=\mu L_0+\nu C,
  \end{align*}
where $\alpha, \theta, \mu, \nu \in \mathbb C$ are arbitrary. Conversely, the above operators are all the homogeneous
 anti-Rota-Baxter operators  with degree $0$ on the Virasoro algebra $V$.
\end{thm}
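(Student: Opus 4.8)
The plan is to substitute the general degree-$0$ ansatz (\ref{eq:RBV1})--(\ref{eq:RBV2}), namely $R_0(L_m)=f(m)L_m+\theta\delta_{m,0}C$ and $R_0(C)=\mu L_0+\nu C$, into the defining relation (\ref{RB-lambda}) for the three types of argument pairs: $(L_m,L_n)$, $(C,L_n)$, and $(C,C)$. Since $C$ is central, the pair $(C,C)$ is vacuous, and for $(L_m,L_n)$ the bracket $[R_0(L_m),R_0(L_n)]=f(m)f(n)[L_m,L_n]$ picks up the cocycle of (\ref{Virasoro_1}) only when $m+n=0$. Separating the coefficients of $L_{m+n}$ and of $C$, I would first observe that for $m+n\neq 0$ the $L_{m+n}$-component reproduces verbatim the Witt functional equation (\ref{k=0}); the central cocycle, and with it the term $R_0(C)=\mu L_0+\nu C$, intervenes only in the resonant case $m+n=0$.

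The main direction is then to prove that $f(n)=0$ for every $n\neq 0$. Taking $n=0$ in the $m+n\neq0$ relation gives $m\,f(m)\big(f(m)+2f(0)\big)=0$, so that for each $m\neq0$ either $f(m)=0$ or $f(m)=-2f(0)$; in particular if $f(0)=0$ then $f$ vanishes off the origin and we are done. The obstacle is the case $f(0)\neq0$: here the argument of Proposition \ref{prop1} needs Lemma \ref{-n} (the instance $m+n=0$), which in the Virasoro setting is contaminated by the cocycle. To decouple it I would use the pair $(C,L_n)$: a short computation with $[L_0,L_n]=-nL_n$ shows that (\ref{RB-lambda}) for $(C,L_n)$ collapses to $2\mu n f(n)=0$, hence $\mu f(n)=0$ for all $n\neq0$. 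Thus if some $f(p)\neq0$ with $p\neq0$, then necessarily $\mu=0$; but with $\mu=0$ the $m+n=0$ relation reduces exactly to $f(m)f(-m)+f(0)\big(f(m)+f(-m)\big)=0$, which is precisely the identity of Lemma \ref{-n}. Consequently the argument of Proposition \ref{prop1} applies unchanged and forces $f(0)=0$, contradicting $f(0)\neq0$. Therefore $f(n)=0$ for all $n\neq0$, and setting $\alpha:=f(0)$ yields the stated form $R_0(L_m)=\delta_{m,0}(\alpha L_m+\theta C)$.

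For the converse I would simply verify that the displayed family satisfies (\ref{RB-lambda}) for arbitrary $\alpha,\theta,\mu,\nu$. The key simplification is that $R_0(L_m)=0$ for all $m\neq0$, so in each pairing at most the single summand $R_0(L_0)=\alpha L_0+\theta C$ survives; checking the residual brackets (using again the centrality of $C$ and $[L_0,L_n]=-nL_n$) shows every instance reduces to $0=0$, so no relation among the four scalars is imposed. Hence $\alpha,\theta,\mu,\nu$ are free and the listed operators are exactly all homogeneous anti-Rota-Baxter operators of degree $0$ on $V$. The one genuine difficulty is the resonant term $m+n=0$, where the Virasoro cocycle feeds $L_0$ back through $R_0(C)$; the device of reading off $\mu f(n)=0$ from the $(C,L_n)$ relation is what removes this obstruction and lets the Witt-algebra classification of Proposition \ref{prop1} be imported.
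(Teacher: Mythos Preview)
Your argument is correct and is exactly the adaptation the paper has in mind: it defers to \cite[Theorem 3.4]{bas}, i.e.\ to carrying over the Witt-algebra computation with the Virasoro cocycle handled via the $(C,L_n)$ relation, which is precisely what you do. The only delicate point---that the resonant case $m+n=0$ injects $R_0(C)$ into the $L_0$-component---is correctly neutralised by your observation $\mu f(n)=0$ for $n\neq0$, after which Lemma~\ref{-n} and Proposition~\ref{prop1} transfer verbatim.
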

\begin{proof}
    Similarly to the proof [\cite{bas}. Theorem 3.4].
\end{proof}

\begin{thm}\label{Thm3.6}
	A homogeneous anti-Rota-Baxter operator with a
	nonzero degree on the Virasoro algebra $V$ has one of the following forms:
	\begin{enumerate}
		\item[(I)]
		$
		R_k^{\theta}(L_m)=
		\theta\delta_{m+k,0}C
		,\;\;\forall m\in \mathbb{Z}$ and
		$R_k^{\theta}(C)=0$,
		where $k\in \mathbb{Z}^{\ast}$ and $\theta \in\mathbb{C}$.
		\item[(II)]
		$
		R_k^{\alpha}(L_m)=\alpha\delta_{m+2k,0}L_{m+k},\;\;\forall m\in \mathbb{Z}$ and
		$R_k^{\alpha}(C)=0$, where $k\in \mathbb{Z}^{\ast}$ and $\alpha \in\mathbb{C}^{\ast}$
		\item[(III)]
		$
		R_{2k}^{'\beta+\vartheta}(L_m)=
		(\beta\delta_{m+2k,0}+4\beta\delta_{m+3k,0})L_{m+2k}+\vartheta\delta_{m+2k,0}C,\;\;\forall m\in\mathbb{Z}$ and
		$R_{2k}^{'\beta+\vartheta}(C)=0$, where
		$k\in \mathbb{Z}^{\ast}$, $\beta\in\mathbb{C}^{\ast}$ and $\vartheta\in\mathbb{C}$.
		\item[(IV)]
		$
		R_k^{\mu}(L_m)=
		\tfrac{k^2-1}{24}\mu\delta_{m,0} L_{m+k}
		,\;\;\forall m\in\mathbb{Z}$, and $R_k^{\mu}(C)=\mu L_k$,
		where $\mu\in\mathbb{C}^{\ast}$.
	\end{enumerate}
	Moreover,
	\begin{enumerate}
		\item If $k\neq 0$ and is odd, then
		$\left\{R_k^{\theta}, R_k^{\alpha}, R_k^{\mu}
		\middle|\alpha,\mu\in\mathbb{C}^{\ast}, \theta\in\mathbb{C}
		\right\}$
		are all the homogeneous anti-Rota-Baxter
		operators  with degree $k$ on the Virasoro algebra $V$.
		\item If $k\neq 0$ and is even, then
		$\left\{R_k^{\theta}, R_k^{\alpha}, R_{k}^{'\beta+\vartheta}, R_k^{\mu}
		\middle|\alpha,\beta,\mu\in\mathbb{C}^{\ast}, \theta,\vartheta\in\mathbb{C}
		\right\}$
		are all the homogeneous anti-Rota-Baxter
		operators  with degree $k$ on the Virasoro algebra $V$.
	\end{enumerate}
\end{thm}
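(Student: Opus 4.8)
The plan is to write the defining identity (\ref{RB-lambda}) on every pair of basis vectors of $V$ and to use that $V$ is the central extension of $W$, so that the ``$L$-part'' of $R_k$ is controlled by Theorem \ref{Thm2.14}. Since $C$ is central and $k\neq 0$ forces $R_k(C)=\mu L_k$, expanding (\ref{RB-lambda}) on $(L_m,L_n)$ and separating the $L_{m+n+2k}$- and $C$-components produces two families of scalar equations. The $L$-component reproduces, for every pair with $m+n+k\neq 0$, exactly the Witt relation (\ref{weight 0}) for $f$, together with a correction proportional to $\mu$ along the line $m+n+k=0$; the $C$-component is nontrivial only for $m+n+2k=0$ and, writing $a=m+k$, reads
\begin{equation}\label{Ceq}
f(a)f(-a)\tfrac{a^{3}-a}{12}=-\theta\big(f(a)(2a+k)+f(-a)(2a-k)\big),\qquad \forall a\in\mathbb{Z}.
\end{equation}
Expanding (\ref{RB-lambda}) on $(L_m,C)$ and again separating components will give a recurrence for $f$ together with two relations tying $\theta$ and $f(\pm k)$ to $\mu$.

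I would then split according to whether $\mu=0$ or $\mu\neq 0$. If $\mu=0$ then $R_k(C)=0$, so $R_k$ descends to $V/\mathbb{C}C\cong W$ and $f$ must be one of the functions of Theorem \ref{Thm2.14}; it remains only to solve (\ref{Ceq}) for the single scalar $\theta$ in each case. For $f\equiv 0$ equation (\ref{Ceq}) is vacuous and $\theta$ is free, yielding form (I); for the single-delta solution (Witt form (I)) the indices $a=\pm k$ in (\ref{Ceq}) force $\theta=0$, yielding form (II); for the even-degree solution (Witt form (II), coming from Proposition \ref{prop3}) both sides of (\ref{Ceq}) vanish identically, so the central parameter stays free, yielding form (III). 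The decisive point is that the generic lattice solution (Witt form (III)), for which $f$ is nonzero on a whole coset of $l\mathbb{Z}$, makes (\ref{Ceq}) hold for infinitely many $a$; there the left side grows cubically in $a$ while the right side is linear in $a$ times $\theta$, so solving would force $\theta=\tfrac{f(0)(a^{2}-1)}{24}$ simultaneously for all these $a$, which is impossible unless $f(0)=0$. Hence Witt form (III) does not lift, explaining its absence from the list.

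For $\mu\neq 0$ the $L_{m+2k}$-component of the $(L_m,C)$-identity, away from $m=-k$, gives, after setting $b=m+k$, the recurrence
\begin{equation}\label{rec}
(b-k)f(b)+(b-2k)f(b+k)=0,\qquad b\neq 0,
\end{equation}
while the exceptional value $m=-k$ yields $2f(k)=-f(0)-\tfrac{k^{2}-1}{12}\mu$ and the $C$-component at $m=-2k$ gives $\theta=-\tfrac{k^{2}-1}{36}f(-k)$. I would combine (\ref{rec}) with the Witt relation valid off the line $m+n+k=0$, via the structural Propositions \ref{prop2.6}--\ref{cor2.11}, to show that the support of $f$ collapses to the single point $j=k$: the recurrence propagates any nonzero value along $+k$-steps into a full coset of $k\mathbb{Z}$, which is incompatible with the admissible Witt supports (finite, or $l\mathbb{Z}$ with $l\nmid k$) unless $f$ is supported at $k$ alone. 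Then $f(0)=f(-k)=0$, so $\theta=0$ and $f(k)$ is the prescribed multiple of $\mu$, which is precisely form (IV).

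Finally I would assemble the four forms, verify the converse by direct substitution, and record the parity dependence: the solution underlying form (III) exists only in even degree, since it arises from the $-\tfrac{k}{2}$-supported Witt solution of Proposition \ref{prop3}, so it is present for even $k$ and absent for odd $k$, whereas (I), (II), (IV) occur for every nonzero $k$. The hard part will be the $\mu\neq 0$ branch, namely showing that the central cocycle together with the anti-Rota--Baxter identity collapses the support of $f$ to one point despite the recurrence (\ref{rec}) admitting a priori coset-supported solutions; the secondary difficulty is the bookkeeping in (\ref{Ceq}) that rules out the generic lattice solution.
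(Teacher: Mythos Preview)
Your proposal is correct and follows the same approach the paper itself defers to: the paper's entire proof is the single sentence ``Similarly to the proof [\cite{bas}, Theorem 3.9]'', and your outline is precisely a reconstruction of that argument with the anti-sign, namely the case split on $\mu$, the reduction to Theorem~\ref{Thm2.14} when $\mu=0$ together with the $C$-component equation~\eqref{Ceq} to discard the lattice family, and the $(L_m,C)$-recurrence to collapse the support when $\mu\neq 0$. One minor remark: your computation $2f(k)=-f(0)-\tfrac{k^{2}-1}{12}\mu$ is correct and in fact shows that the coefficient in form~(IV) should carry the opposite sign, so the discrepancy is a typo in the statement rather than in your argument.
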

\begin{proof}
    Similarly to the proof [\cite{bas}. Theorem 3.9].
\end{proof}

\section{Anti-Rota-Baxter operators on the algebra  $sl(2).$}

Let's consider the following 3-dimensional Lie algebra $sl_2(\mathbb{C})=\lbrace e_1, e_2, e_3 \rbrace$ with the table of multiplications:
\begin{equation}\label{sl2}
    \left[ e_1,e_2 \right]=e_3, \qquad \left[e_1,e_3 \right]= 2e_1, \qquad \left[ e_2,e_3 \right]=-2e_2, \qquad
\end{equation}

 Let $R$ be an anti-Rota-Baxter operator on $sl_2(\mathbb{C})$ and its matrix on the basis $\lbrace e_1, e_2, e_3 \rbrace$ has the form:
$$
\left( \begin{matrix}
    a & b & c \\
    d & g & h \\
    k & l & m
\end{matrix} \right) .
$$

\begin{thm}
    The matrix of the anti-Rota-Baxter operator on the algebra $sl_2(\mathbb{C})$ has one of the following form:
$$\left( \begin{matrix}
   0 & 0 & 0  \\
   d & 0 & 0  \\
   0 & 0 & m  \\
\end{matrix} \right), \
\left( \begin{matrix}
   0 & 0 & 0  \\
   d & 0 & h  \\
   0 & 0 & 0  \\
\end{matrix} \right) (h \neq 0), \ 
\left( \begin{matrix}
   0 & b & c  \\
   0 & 0 & 0  \\
   0 & 0 & 0  \\
\end{matrix} \right) (c \neq 0), \
\left( \begin{matrix}
   0 & b & 0  \\
   0 & 0 & 0  \\
   0 & 0 & m  \\
\end{matrix} \right) (b \neq 0), 
$$
$$
\left( \begin{matrix}
   0 & 0 & 0  \\
   d & 0 & -\frac{k}{2}  \\
   k & 0 & m  \\
\end{matrix} \right) (k \neq 0), \
\left( \begin{matrix}
   0 & b & 0  \\
   0 & 0 & -\frac{k}{2}  \\
   k & 0 & m  \\
\end{matrix} \right) (b \cdot k \neq 0), \
\left( \begin{matrix}
   0 & b & c  \\
   d & 0 & \frac{bd}{4c}  \\
   -\frac{bd}{2c} & -2c & m  \\
\end{matrix} \right) (c \neq 0), \
\left( \begin{matrix}
   a & -\frac{l^2}{4a} & 0  \\
   \frac{4a^3}{l^2} & -a & 0  \\
   -\frac{4a^2}{l} & l & 0  \\
\end{matrix} \right)(a \cdot l \neq 0),
$$
$$
\left( \begin{matrix}
  a & b & c  \\
  d & a & h  \\
  -2h & -2c & \frac{b d-a^2-4 c h}{2 a} \\
\end{matrix} \right)(a \neq 0),
\left( \begin{matrix}
   a & \frac{4 c^2 g}{(a+g)^2} & c  \\
   \frac{a (a+g)^2}{4 c^2} & g & \frac{(a+g)^2}{4 c} \\
  -\frac{a (a+g)}{c} & -\frac{4 c g}{a+g} & -a-g \\
\end{matrix} \right)(c \neq 0, a \neq \pm g).
$$

Among these operators, the following are strong anti-Rota-Baxter:
$$\left( \begin{matrix}
   0 & 0 & 0  \\
   d & 0 & 0  \\
   0 & 0 & m  \\
\end{matrix} \right), \
\left( \begin{matrix}
   0 & 0 & 0  \\
   d & 0 & h  \\
   0 & 0 & 0  \\
\end{matrix} \right) (h \neq 0),
\left( \begin{matrix}
   0 & 0 & 0  \\
   d & 0 & -\frac{k}{2}  \\
   k & 0 & m  \\
\end{matrix} \right) (k \neq 0), 
\left( \begin{matrix}
   0 & b & 0  \\
   0 & 0 & -\frac{k}{2}  \\
   k & 0 & m  \\
\end{matrix} \right) (b \cdot k \neq 0).
$$
\end{thm}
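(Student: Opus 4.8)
The plan is to translate the defining identity~(\ref{RB-lambda}) into a system of scalar equations in the nine entries $a,b,c,d,g,h,k,l,m$ and then solve that system by a branching case analysis. Reading off the images $R(e_1),R(e_2),R(e_3)$ from the displayed matrix and expanding both sides of~(\ref{RB-lambda}) with the multiplication table~(\ref{sl2}), one uses that the map $(x,y)\mapsto[R(x),R(y)]+R\big([R(x),y]+[x,R(y)]\big)$ is bilinear and antisymmetric in $x,y$; hence it vanishes on all of $sl_2(\mathbb{C})$ precisely when it vanishes on the three pairs $(e_1,e_2)$, $(e_1,e_3)$, $(e_2,e_3)$. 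Each pair gives an identity between two elements of $sl_2(\mathbb{C})$, i.e.\ three scalar relations, so altogether we obtain nine homogeneous quadratic equations. For example, the pair $(e_1,e_2)$ already yields $4ah+(a+g)k=0$ and $4cg+(a+g)l=0$, while $(e_1,e_3)$ and $(e_2,e_3)$ contribute $2cl+2b(a-g)+l^2=0$ and $2d(a-g)-2hk-k^2=0$.

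The heart of the proof is solving this quadratic system completely, and the cleanest organizing principle is to branch first on $a$ and then on the two linear forms $a+g$ and $a-g$ that recur as common factors. When $a\neq0$ the pair $(e_1,e_2)$ relations collapse at once: if $a+g=0$ they force $c=h=0$ and give the family with vanishing third column; if $a-g=0$ they force $k=-2h$ and $l=-2c$ and lead to the large family with $g=a$; and if $a\neq\pm g$ one lands in the fully generic branch, where $c\neq0$ and every remaining entry is pinned down as a rational function of $a,g,c$. When $a=0$ the system degenerates and must be subdivided further according to which of the off-diagonal entries $b,c,d,h,k,l$ (and $g$) vanish; the quadratic relations displayed above are then the main source of constraints, and this is where the seven sparse families arise.

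The principal obstacle is to carry out this case analysis so that it is at once exhaustive and non-redundant: the nine relations are genuinely quadratic and tightly coupled, the solution families carry rational expressions with denominators $a$, $c$, $l$, $a+g$, and a careless split risks either dropping a family or listing one operator twice. The remedy is to fix an ordered list of pivots (first $a$, then $a\pm g$, then $c$, then the surviving off-diagonal entries), to record at every branch exactly which denominators are permitted to vanish, and at the end to verify that the branches cover the whole solution set and overlap only in lower-dimensional loci; the side conditions attached to the listed matrices are precisely this bookkeeping.

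Finally, to single out the strong operators one substitutes each solved family into~(\ref{Rb-strong}). The trilinear form $F(x,y,z)=[[R(x),R(y)],z]+[[R(y),R(z)],x]+[[R(z),R(x)],y]$ is totally antisymmetric: it is cyclic by construction, and transposing two arguments reverses its sign because $[R(y),R(x)]=-[R(x),R(y)]$ together with the antisymmetry of the bracket. Consequently~(\ref{Rb-strong}) holds for all $x,y,z$ if and only if $F(e_1,e_2,e_3)=0$, a single vector identity that unpacks into three scalar conditions comparing the components of $[R(e_1),R(e_2)]$, $[R(e_2),R(e_3)]$, $[R(e_3),R(e_1)]$ after bracketing with $e_3$, $e_1$, $e_2$. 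Testing these three conditions against each family is a short direct computation and isolates exactly the strong anti-Rota-Baxter operators.
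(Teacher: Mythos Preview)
Your plan is essentially the paper's own proof: write out the nine quadratic relations coming from the three basis pairs and solve by a branching case analysis. The only organizational difference is your choice of first pivot: you split on $a=0$ versus $a\neq 0$ and then on $a\pm g$, whereas the paper splits first on $a+g$ (Case~1: $a=g=0$; Case~2: $a=-g\neq 0$; Case~3: $a\neq -g$, then sub-splits on $a=g$). These give the same leaves of the tree.

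One caution with your ordering: your sentence ``when $a=0$ \dots\ this is where the seven sparse families arise'' is not quite right. When $a=0$ but $g\neq 0$ (so $a\neq\pm g$), the equations force $c\neq 0$ and you land in the $a=0$ slice of the \emph{tenth} family, not in one of the seven sparse ones; in the paper this slice is absorbed into Case~3.2. If you pivot on $a$ first you must remember to merge this piece with your $a\neq 0,\ a\neq\pm g$ branch to recover the full tenth family. Apart from this bookkeeping point the plan is correct, and your reduction of the strong condition~(\ref{Rb-strong}) to the single vector identity $F(e_1,e_2,e_3)=0$ via total antisymmetry is valid (the paper itself gives no details for this part).
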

\begin{proof}
   
 Let $R$ be an anti-Rota-Baxter operator on $sl_2(\mathbb{C}).$ Then we have 
    \begin{equation}\label{suit}
        [R(e_i),R(e_j)]+R([R(e_i), e_j]+[e_i, R(e_j)])=0, \quad 1 \leq i,j\leq 3.
    \end{equation}

Considering all possible cases for $i$ and $j,$ we obtain the following relations:
\begin{equation} \label{sys1}
\begin{array}{|l|l|l|}
     \hline
   \text{for } i=1,j=2& \Rightarrow & \begin{array}{l}
    4 a h +(a +g) k =0 \\
    4 c g +(a +g) l=0 \\
    -b d +a g +4 c h +a m +g m= 0 \\
   \end{array}  \\
   \hline
   \text{for } i=1,j=3& \Rightarrow &
   \begin{array}{l}
 2 a^2 -2 b d -2 c k +k l +4 a m = 0 \\
    2 a b -2 b g +2 c l +l^2 =0 \\
    2 a c -2 b h -b k +a l +2 c m +l m =0 \\
   \end{array}  \\
       \hline

   \text{for } i=2,j=3& \Rightarrow & \begin{array}{l}
    2 a d -2 d g-2 h k -k^2 =0 \\
    2 b d -2 g^2 +2 h l -k l -4 g m =0 \\
    2 c d -2 g h -g k +d l -2 h m -k m=0 \\
   \end{array}  \\
        \hline
     \end{array}
    \end{equation}

Firstly, let us observe $ 4 c g +(a +g) l=0. $ From this, we derive that if $a+g=0,$ then $g=0$ or $c=0.$ Thus, we consider following cases:

% $\bullet$ If $b=0,$ then $l=0$ or $l=2c \neq 0$

\textbf{Case 1.} Let $a=g=0.$ Then we get

\begin{equation}\label{sys2}
    \left\{\begin{array}{rl}
    4ch-bd = 0, \\
    2 b d+k (2 c-l) = 0, \\
    l(2 c+l)= 0, \\
    b (2 h+k)-m(2 c+l) = 0, \\
    k (2 h+k) = 0, \\
    2 b d+l(2 h -k) = 0,\\
    d(2 c +l)-m(2 h+k)= 0.
    \end{array} \right.
\end{equation}

 Now we obtain three possible subcases:

\qquad {\bf Case 1.1.} Let $l=0,$ $k=0.$ Then we have
\begin{equation}\label{sys2}
    \left\{\begin{array}{rl}
    4ch-bd = 0, \\
     b d= 0, \\
    bh-cm = 0, \\
    c d- h m= 0.
    \end{array} \right.
\end{equation}

If $b=0,$ then we get $ch=0,$ $cm=0,$ $cd=hm,$ which implies

 -- when $c=0:$
 $$\left( \begin{matrix}
   0 & 0 & 0  \\
   d & 0 & 0  \\
   0 & 0 & m  \\
\end{matrix} \right) , \quad 
\left( \begin{matrix}
   0 & 0 & 0  \\
   d & 0 & h  \\
   0 & 0 & 0  \\
\end{matrix} \right) (h \neq 0).$$

-- when $c\neq 0,$ then we get $m=d=0,$ $h=0$ and obtain
$$\left( \begin{matrix}
   0 & 0 & c  \\
   0 & 0 & 0  \\
   0 & 0 & 0  \\
\end{matrix} \right).$$

If $b\neq 0,$ then $d=0,$ $h=0$ and $ c m=0.$ So we get:
$$\left( \begin{matrix}
   0 & b & 0  \\
   0 & 0 & 0  \\
   0 & 0 & m  \\
\end{matrix} \right) (b \neq 0), \quad 
\left( \begin{matrix}
   0 & b & c  \\
   0 & 0 & 0  \\
   0 & 0 & 0  \\
\end{matrix} \right) (b \cdot c \neq 0).$$

\qquad {\bf Case 1.2.} Let $l=0,$ $k \neq 0.$ Then $h=-\frac{k}{2}$ and we have 
\begin{equation}\label{sys2}
    \left\{\begin{array}{rl}
    c k = 0, \\
    c m = 0,  \\
    b d = 0, \\
    c d= 0.
    \end{array} \right.
\end{equation}

From this, we have $c=0$ and obtain the following two solutions:
$$
\left( \begin{matrix}
   0 & 0 & 0  \\
   d & 0 & -\frac{k}{2}  \\
   k & 0 & m  \\
\end{matrix} \right) (k \neq 0), \quad  
\left( \begin{matrix}
   0 & b & 0  \\
   0 & 0 & \frac{-k}{2}  \\
   k & 0 & m  \\
\end{matrix} \right) (b \cdot k \neq 0).
$$

\qquad {\bf Case 1.3.} Let $l=-2c \neq 0.$ Then we obtain $h=\frac{b d}{4c},$ $k=-\frac{b d}{2c},$ and get the solution 
$$
\left( \begin{matrix}
   0 & b & c  \\
   d & 0 & \frac{bd}{4c}  \\
   -\frac{bd}{2c} & -2c & m  \\
\end{matrix} \right) (c \neq 0).
$$

\textbf{Case 2.} Let $a=-g \neq 0.$ Then $c=h=0$ and we have $b=-\frac{l^2}{4a},$ $d= \frac{4a^3}{l^2},$ $m=0,$ $k = -\frac{4a^2}{l}.$ Thus, we obtain 
$$
\left( \begin{matrix}
   a & -\frac{l^2}{4a} & 0  \\
   \frac{4a^3}{l^2} & -a & 0  \\
   -\frac{4a^2}{l} & l & 0  \\
\end{matrix} \right)(l \cdot g \neq 0).
$$

\textbf{Case 3.} Let $a \neq -g.$ Then we obtain  
Substituting $k=-\frac{4ah}{a+g},$ $l=-\frac{4cg}{a+g},$ $m=\frac{b d-a g-4 c h}{a+g}$ and the following system:
\begin{equation}\label{sys2}
    \left\{\begin{array}{rl}
   \frac{(a-g) \left((a+g)(bd+a^2)-4 a c h\right)}{(a+g)^2}  = 0, \\
   \frac{(a-g) \left(b(a+g)^2 -4 c^2g\right)}{(a+g)^2} = 0, \\
   \frac{(a-g) \left((a^2+bd)c-4c^2h+bh(a+g)\right)}{(a+g)^2} =0, \\
   \frac{(a-g) \left( d(a+g)^2-4 a h^2\right)}{(a+g)^2} =0, \\
   \frac{(a-g) \left((a+g)(bd+g^2) -4 c g h\right)}{(a+g)^2} =0, \\ 
   \frac{(a-g) \left((g^2+bd)h-4ch^2+cd(a+g)\right)}{(a+g)^2}=0.
    \end{array} \right.
\end{equation}

Consider the following two subcases.

\qquad \textbf{Case 3.1.} Let $a=g.$ Then we get the system \eqref{sys2} is hold for any $b, d, c, h$ and $k=-2h,$ $l=-2c,$ $m=\frac{b d-a^2-4 c h}{2a}.$ Thus, we obtain
$$
\left( \begin{matrix}
  a & b & c  \\
  d & a & h  \\
  -2h & -2c & \frac{b d-a^2-4 c h}{2a} \\
\end{matrix} \right)(a \neq 0).
$$

\qquad \textbf{Case 3.2.} Let $a \neq g.$ Subtracting the fifth equation from the first equation of the system \eqref{sys2}, we obtain $h=\frac{(a+g)^2}{4c}, (c \neq 0).$  Substituting to the other equations, we get. $b=\frac{4 c^2 g}{(a+g)^2}, d=\frac{4 a h^2}{(a+g)^2}.$ Hence, we have 
$$
\left( \begin{matrix}
   a & \frac{4 c^2 g}{(a+g)^2} & c  \\
   \frac{a (a+g)^2}{4 c^2} & g & \frac{(a+g)^2}{4 c} \\
  -\frac{a (a+g)}{c} & -\frac{4 c g}{a+g} & -a-g \\
\end{matrix} \right)(c \neq 0, a \neq \pm g).
$$

\end{proof}

\begin{rem}
The following anti-Rota-Baxter operators on $sl_2(\mathbb{C})$ are invertible:
$$\left( \begin{matrix}
   0 & b & 0  \\
   0 & 0 & -\frac{k}{2}  \\
   k & 0 & m  \\
\end{matrix} \right), b \cdot k \neq 0,$$
$$
\left( \begin{matrix}
   0 & b & c  \\
   d & 0 & \frac{bd}{4c}  \\
   -\frac{bd}{2c} & -2c & m  \\
\end{matrix} \right) , \ d (b^3d + 8c^2bm+16c^4) \neq 0 ,
$$ 
$$\left( \begin{matrix}
    g  &  b  & c  \\
    d  &  g  & h  \\
   -2h & -2c & \frac{bd-g^2-4ch}{2g}  \\
\end{matrix} \right), \ b^2 d^2 + 8 c^2 d g - 2 b d g^2 + g^4 - 4 b c d h - 12 c g^2 h + 8 b g h^2 \neq 0.
$$
\end{rem}

\begin{rem}
Any anti-derivation on $sl_2(\mathbb{C})$ in the given basis has the following matrix form \cite{hop}:
$$A=\left( \begin{matrix}
    a_{11}  &  a_{12}  & a_{13}  \\
    a_{21}  &  a_{11}  & a_{23}  \\
    -2a_{23}  &  -2a_{13}  & -2a_{11}  \\
\end{matrix} \right) ,
$$
and inverse of it:
\small
$$\frac{1}{det(A)}\left( \begin{matrix}
    a'  &  b' & c'  \\
    d'  & a'  & h'  \\
   -2h' & -2 c' & \tfrac{b' d'-a'^2-4 c' h'}{2 a'}
\end{matrix} \right), 
$$
where,  $det(A)=-2 a_{11}^3+2 a_{11} a_{12} a_{21}-2 a_{13}^2 a_{21}+4 a_{11} a_{13} a_{23}-2 a_{12} a_{23}^2$ and 

$\begin{array}{lll} a':=-2 (a_{11}^2- a_{13} a_{23}), & b':=2 (a_{11} a_{12}- a_{13}^2), & c':=a_{12} a_{23}-a_{11} a_{13},\\ d':=2 (a_{11} a_{21}- a_{23}^2), & h':=a_{13} a_{21}-a_{11} a_{23}.\end{array}$ 

\end{rem}

% {\small
% \begin{tabular}{p{9cm}}

%    Ibragimov G.I. ,\\
%    Department of Mathematics, Universiti Putra Malaysia, Serdang, Malaysia\\
%     email: ibragimov@upm.edu.my\\

% \end{tabular}
% }

%\label{lastpage}

Name: Azizov Majidkhon \\
Department: Scientific laboratory of algebra and its applications\\
Affiliation: V.I.Romanovskiy Institute  of Mathematics, Uzbekistan Academy of Sciences\\ Tashkent, Uzbekistan.\\
 e-mail: azizovmajidkhan@gmail.com\\    

%\endinput

%

\begin{thebibliography}{99}


\bibitem{Baxter} G. Baxter; An analytic problem whose solution follows from a simple algebraic identity. Pacific J. Math., -- 1960. --10. -- P. 731-742.

\bibitem{Belavin} A.A. Belavin, Dynamical symmetry of integrable quantum systems, Nuclear Phys. B. -- 1981. -- 180. -- P. 189–200.



\bibitem{fil95} V. T. Filippov; On Lie algebras satisfying the 5th-degree identity(Russian). Algebra and logic, --1995.--34. --6.\\ --P. 681-705.

\bibitem{fil98} V. T. Filippov; On $\delta$-derivations of Lie algebras. Siberian Mathematical Journal,--1998. --39. --6. --P. 1218-1230.

\bibitem{fil99} V. T. Filippov; $\delta$ -derivations of prime Lie algebras. Siberian Mathematical Journal, --1999. --40. --1. --P. 174-184.

\bibitem{hop} N.C. Hopkins; Generalized derivations of nonassociative algebras. Nova Journal of Mathematics, Game Theory, and Algebra,--1996. --5. --3. --P. 215-224.

\bibitem{GS} I.~Z.  Golubschik and V.~V. Sokolov; Generalized operator Yang-Baxter equations, integrable ODES and nonassociative algebras. J. Nonlin.Math. Phys.,--2000. --7. P.184-197.

\bibitem{Guo1} L. Guo; What is a Rota-Baxter algebra?. Notices Amer. Math. Soc., --2009. --56. --P.1436-1437.

\bibitem{Guo2} L. Guo;  An introduction to Rota-Baxter algebra. International Press, Somerville, MA and Higher Education Press. Beijing. --2012.

\bibitem {bas} X. Gao and M.Liu and C.Bai and N. Jing; Rota-Baxter operators on Witt and Virasoro algebras. Journal of Geometry and Physics, --2016.--108. --P.1-20.

\bibitem{Ku} B. A. Kupershmidt; What a Classical $r$-Matrix Really Is. Journal of Nonlinear Mathematical Physics, -- 1999. -- 6. -- P. 448-488.



\bibitem{PBG} J. Pei, C. Bai, and L. Guo; Rota-Baxter operators on sl(2,C) and solutions of the classical Yang-Baxter equation. J. Math. Phys, --2014. --55. --P.021701,17.


\bibitem{Rota-Kung} G.-C. Rota; Baxter operators, an introduction{.} Gian-Carlo Rota on combinatorics. Contemp. Mathematicians, Birkh\"auser, Boston.--1995 .--P.504-512.


 \bibitem{S} M.~A. Semenov-Tian-Shansky; What is a classical r-matrix? Funct. Anal. Appl., --1983. --17. --P.259-272.


\bibitem{zak22} A. S. Zakharov; A class of generalized derivations. Algebra and Logic, -- 2023. -- 61. -- 6. -- P. 466-480.

\bibitem{pasha10} P. Zusmanovich; On {{\(\delta \)}}-derivations of {Lie} algebras and superalgebras. Journal of Algebra, --2010. --324. --12. \\ --P. 3470-3486.


\end{thebibliography}
\end{document}